\newtheorem{Theorem}{Theorem}[section]
\newtheorem{Proposition}[Theorem]{Proposition}
\newtheorem{Lemma}[Theorem]{Lemma}
\newtheorem{Remark}[Theorem]{Remark}
\def\id{\mathrm{Id}}
\def\fra{\mathfrak}
\def\Sigm{\mathcal S}
\def\HH{\widetilde {\mathrm H}}
\def\HHH{M}
\def\HHHH{\mathrm H}
\def\jj{j}
\def\JJJ{\mathrm J}
\def\rbar{{\mathrm B}}
\def\III{\mathrm I}
\def\ppartial{\mathcal D}
\def\dd{d}
\def\fra{\mathfrak}
\def\scs{\sc}
\def\Sigm{\mathrm S}
\def\SSS{\mathrm S^{\mathrm c}}
\def\rcob{\Omega}
\def\ppppartial{\mathcal D}
\def\dell{\mathcal D}
\def\ppi{\pi}
\def\Nsddata#1#2#3#4#5{
\begin{equation*}
   (#4
     \begin{CD}
      \null @>#2>> \null\\[-3.2ex]
      \null @<<#3< \null
     \end{CD}
    #1, #5)
  \end{equation*}
}
\numberwithin{equation}{section}
\begin{document}

\title{\bf On the construction of $A_{\infty}$-structures}

\author{
J.~Huebschmann
\\[0.3cm]
 USTL, UFR de Math\'ematiques\\
CNRS-UMR 8524
\\
59655 Villeneuve d'Ascq C\'edex, France\\
Johannes.Huebschmann@math.univ-lille1.fr
 }

\maketitle

\centerline{\it To Tornike Kadeishvili} 

\medskip
\begin{abstract}
{We relate a construction of Kadeishvili's
establishing an  $A_{\infty}$-structure on the homology of
a differential graded algebra 
or more generally  of an $A_{\infty}$ algebra
with certain constructions of Chen and Gugenheim.
Thereafter we establish the links of these 
constructions with subsequent developments.}
\end{abstract}

\noindent
\\[0.3cm]
{\bf Subject classification:}~
16E45, 
16W30, 
17B55, 
17B56, 
17B65, 
17B70, 
18G55, 
55P62
\\[0.3cm]
{\bf Keywords:}~  $A_{\infty}$-algebra, $C_{\infty}$-algebra,
$L_{\infty}$-algebra, twisting cochain,
iterated integral, homological perturbation, minimality theorem,
minimal model, perturbation lemma,
(co)algebra perturbation lemma, Lie algebra perturbation lemma,
differential graded Lie (co)algebra

\tableofcontents

\section{Introduction}

We will relate a construction of Kadeishvili's
establishing an  $A_{\infty}$-structure on the homology of
a differential graded algebra 
or more generally  of an $A_{\infty}$ algebra
with certain constructions of Chen and Gugenheim.
We will then establish the links of these 
constructions with subsequent developments.

Let $R$ be a commutative ring and $A$ a differential graded algebra
over $R$. Suppose that, as a graded $R$-module, the 
homology $\mathrm H (A)$ of $A$ is free.
Then  $\mathrm H (A)$ acquires an  $A_{\infty}$-algebra structure
that is equivalent to $A$. 
Over a field, 
so that the freeness hypothesis relative to 
$\mathrm H (A)$ is automatically satisfied,
this fact is nowadays quoted as the \lq\lq minimality theorem\rq\rq\ 
for differential graded algebras---
we will discuss the issue of \lq\lq minimality theorem\rq\rq\ below.
Such a result was published
by Kadeishvili in 1980 \cite{kadeione}.
Over a general ground ring $R$,
a related result involving HPT was
published by V. Gugenheim in 1982 \cite{gugenhth}.
More precisely, starting from a simply connected
coaugmented differential graded coalgebra $C$ over the ground ring $R$
that is homology split (e.~g. free as a module over the ground ring),
the homology $\HHHH(C)$ acquires a coaugmented graded coalgebra
structure, and a perturbation of the ordinary cobar construction
relative to the coalgebra structure on 
$\HHHH(C)$ yields an $A_{\infty}$-coalgebra structure
on $\HHHH(C)$ that is equivalent to the original
coalgebra $C$.
This is a version of the 
\lq\lq minimality theorem\rq\rq\ in the realm of coalgebras.
Gugenheim's approach relies on a perturbation argument
developed
over the reals by Chen \cite{chenfou}, published in 1977 and,
furthermore,
in a sense,
Theorem 3.1.1 in Chen's paper \cite{chen} establishes
a version of the \lq\lq minimality theorem\rq\rq.
In 1982, Kadeishvili published a result
which extends his original approach to
a more general
\lq\lq minimality theorem\rq\rq\ 
saying that, over a field, 
the homology  $\HHHH(A)$ of a general $A_{\infty}$-algebra $A$
acquires an  $A_{\infty}$-algebra structure
that is equivalent to $A$ 
\cite{kadeifiv}. 

Because of the present renewed interest in the
\lq\lq minimality theorem\rq\rq,
and to help the presently young avoid loss of contact with the past,
it seems worthwhile explaining the original insight
into the \lq\lq minimality theorem\rq\rq. 
This will place the original results properly
in the literature.
The contributions of Gugenheim and in particular those of Chen
seem to have been largely forgotten. Also there has been a debate in the
literature to what extent the various constructions of 
$A_{\infty}$-structures were explicit; the constructions by Chen,
Gugenheim and Kadeishvili are perfectly explicit.
In fact, we hope to convince the reader that these constructions
essentially boil down to the very same basic idea.
We will then relate the old approaches to subsequent  ones
and show that the recent ones \cite{kontsotw} (6.4), \cite{merkutwo}
essentially still come down to the same basic idea.
In particular we will illustrate how the constructions in terms
of labelled oriented rooted trees \cite{kontsotw} (6.4) are instances of
ordinary HPT constructions. This will, perhaps, demystify
the labelled oriented rooted trees method and make it accessible
to a wider audience.
We will also explain  the Lie algebra 
case.
It turns out that the various constructions
establishing the corresponding statement of the
\lq\lq minimality theorem\rq\rq\ 
in the algebra, coalgebra, Lie algebra situation, etc.
all boil down to essentially the same kind of construction,
as comparison
of  
\eqref{contco}--\eqref{shcc}, 
\eqref{contalg}--\eqref{sha}, 
and \eqref{contlie}--\eqref{shl} 
below shows.

It is a pleasure to dedicate this paper to Tornike Kadeishvili.
I am indebted to him for collaboration and for discussion much beyond
that collaboration. 
Our collaboration \cite{huebkade} led to a number of results related with
the {\em perturbation lemma\/}; in particular we have elaborated on the
compatibility of the perturbation lemma with suitable algebraic structure.
The perturbation lemma  is lurking behind the formulas in
Chapter II of Section 1 of \cite{shih} and seems to have first been
made explicit by M. Barrat (unpublished). The first instance known
to us where it appeared in print is \cite{rbrown}.
By means of that lemma, in \cite{gugenhtw},
V. Gugenheim  developed a lucid proof of the twisted Eilenberg-Zilber theorem
which, in turn, 
was established by E. H. Brown \cite{ebrown} originally via acyclic models.
We have already mentioned Chen's construction  of a perturbation
given in \cite{chen}, 
extended and clarified by V. Gugenheim in \cite{gugenhth}.
These constructions of Chen's and Gugenheim's are somewhat by hand
and do {\em not\/} involve the perturbation lemma.
In \cite{gugensta}, Gugenheim and Stasheff extended that 
construction of a perturbation to the case where the contracted object 
is admitted to have non-zero differential.
In \cite{homotype}, 
I had developed the tensor trick,
see Section \ref{algebra} below,
and the idea of iterative perturbation.
My collaboration with T. Kadeishvili
involved the tensor trick and iterative perturbations and
produced in particular
the {\em (co)algebra perturbation lemma\/}.
This lemma then enabled us to recover the
perturbations of the kind constructed by Chen, Gugenheim, and 
Gugenheim-Stasheff in a conceptual way.
It led as well to a lucid proof of the minimality theorem.
We also developed a perturbation theory
for a general homotopy equivalence,
not necessarily a contraction.
It is, furthermore,  worthwhile noting that
the {\em labelled rooted trees
are lurking behind
the (co)algebra perturbation lemma\/}
but at the time there was no need to spell them out explicitly
in \cite{huebkade}.
We will explain all these and more issues 
in this paper. At these days the perturbation lemma
and variants thereof
are, perhaps, more vivid than ever, see e.~g. \cite{aafr}, \cite{pertlie},
\cite{pertlie2}, \cite{marklthr} and the references there;
in particular, the perturbation theory
for a general homotopy equivalence 
developed in \cite{huebkade}
has been taken up again and pushed
further in \cite{marklthr}. 

I owe some special thanks to Jim Stasheff
for a number of comments on a draft of this paper.

\section{Preliminaries}

The ground ring is a commutative ring with $1$ and will be denoted
by $R$. Later some condition has, perhaps,
 to be imposed upon $R$ 
so that the symmetric coalgebra on the $R$-module under discussion exists
but $R$
is not necessarily a field.  

Indeed, to avoid confusion, recall that, given the graded $R$-module $Y$,
for $j \geq 0$, the notation $\Sigm_j^{\mathrm
c}[Y]\subseteq \mathrm T_j^{\mathrm c}[Y]$  refers to the submodule of
invariants in the $j$'th tensor power $\mathrm T_j^{\mathrm c}[Y]$
relative to the obvious action on $\mathrm T_j^{\mathrm c}[Y]$ of
the symmetric group $S_j$ on $j$ letters, and $\Sigm^{\mathrm
c}[Y]$ refers to the direct sum
\[ \Sigm^{\mathrm
c}[Y]=\oplus_{j=0}^{\infty} \Sigm_j^{\mathrm c}[Y]
\]
of graded $R$-modules.
Some hypothesis is, in general, necessary in order for
the homogeneous constituents
\[
\mathrm T_{j+k}^{\mathrm c}[Y] \longrightarrow \mathrm
T_j^{\mathrm c}[Y] \otimes \mathrm T_k^{\mathrm c}[Y]\ (j,k \geq
0)
\]
of the diagonal map $\Delta \colon \mathrm T^{\mathrm c}[Y] \to
\mathrm T^{\mathrm c}[Y] \otimes \mathrm T^{\mathrm c}[Y]$ of the
graded tensor coalgebra $\mathrm T^{\mathrm c}[Y]$ to induce a
graded diagonal map on $\Sigm^{\mathrm c}[Y]$,
so that $\Sigm^{\mathrm c}[Y]$ is then the {\em symmetric coalgebra\/}
on $Y$.
See Section 3 of \cite{pertlie}.
In particular, 
let $V$ be a projective graded $R$-module, concentrated in odd degrees,
and consider the graded exterior algebra $\Lambda[V]$ on $V$.
The diagonal map $V\to V \oplus V$ is well known to induce a diagonal
map for $\Lambda[V]$ turning the latter into a graded Hopf algebra.
We then denote the resulting graded coalgebra by
$\Lambda'[V]$ and, as usual, refer to it as the {\em exterior coalgebra\/}.
Whenever a graded exterior coalgebra of the kind  $\Lambda'[V]$
is under discussion,
we will suppose throughout that the resulting coalgebra is the
graded symmetric coalgebra $\SSS [V]$ on $V$, that is, that the
canonical morphism of coalgebras from 
$\Lambda'[V]$ to $\SSS [V]$ (induced by the canonical projection
from $\Lambda'[V]$ to $V$)
is an isomorphism of graded coalgebras.
This excludes the prime 2 being a zero divisor in the ground ring $R$.
In particular, a field of characteristic 2 is not admitted as ground ring.
Indeed in characteristic 2 the entire theory requires special treatment.

We will take {\em chain complex\/} to mean {\em
differential graded\/} $R$-{\em module\/}. A chain complex will
not necessarily be concentrated in non-negative or non-positive
degrees. The differential of a chain complex will always be
supposed to be of degree $-1$. For a filtered chain complex $X$, a
{\em perturbation\/} of the differential $d$ of $X$ is a
(homogeneous) morphism $\partial$ of the same degree as $d$ such
that $\partial$ lowers the filtration and $(d +
\partial)^2 = 0$ or, equivalently,
\begin{equation}
[d,\partial] + \partial \partial = 0.
\end{equation}
Thus, when $\partial$ is a perturbation on $X$, the sum $d +
\partial$, referred to as the {\em perturbed differential\/},
endows $X$ with a new differential. When $X$ has a graded
coalgebra structure such that $(X,d)$ is a differential graded
coalgebra, and when the {\em perturbed differential\/} $d +
\partial$ is compatible with the graded coalgebra structure, we
refer to $\partial$ as a {\em coalgebra perturbation\/}; the
notion of {\em algebra perturbation\/} is defined similarly. Given
a differential graded coalgebra $C$ and a coalgebra perturbation
$\partial$ of the differential $d$ on $C$, we will occasionally
denote the new or {\em perturbed\/} differential graded coalgebra
by $C_{\partial}$.
Likewise given a differential graded algebra $A$ and an algebra perturbation
$\partial$ of the differential $d$ on $A$, we will occasionally
denote the new or {\em perturbed\/} differential graded algebra
by $A_{\partial}$.

A {\em contraction\/}
\begin{equation}
   (N
     \begin{CD}
      \null @>{\nabla}>> \null\\[-3.2ex]
      \null @<<{\pi}< \null
     \end{CD}
    M, h) \label{co}
  \end{equation}
of chain complexes \cite{eilmactw} consists of

-- chain complexes $N$ and $M$,
\newline
\indent -- chain maps $\pi\colon N \to M$ and $\nabla \colon M \to
N$,
\newline
\indent --  a morphism $h\colon N \to N$ of the underlying graded
modules of degree 1;
\newline
\noindent these data are required to satisfy
\begin{align}
 \pi \nabla &= \mathrm{Id},
\label{co0}
\\
Dh &= \mathrm{Id} -\nabla \pi, \label{co1}
\\
\pi h &= 0, \quad h \nabla = 0,\quad hh = 0. \label{side}
\end{align}
The requirements \eqref{side} are referred to as {\em annihilation
properties\/} or {\em side conditions\/}.

Let $C$ be a {\em coaugmented\/} differential graded coalgebra
with coaugmentation map $\eta \colon R \to C$ and {\em
coaugmentation\/} coideal $\JJJ C = \mathrm{coker}(\eta)$, the
diagonal map being written as $\Delta \colon C \to C \otimes C$ as
usual. Recall that the counit $\varepsilon \colon C \to R$ and the
coaugmentation map determine a direct sum decomposition $C = R
\oplus \JJJ C$. The {\em coaugmentation\/} filtration $\{\mathrm
F_nC\}_{n \geq 0}$ is as usual given by
\[\mathrm F_nC = \mathrm{ker}(C \longrightarrow (\JJJ C)^{\otimes
(n+1)})\  (n \geq 0)
\]
where the unlabelled arrow is induced by some iterate of the
diagonal $\Delta$ of $C$. This filtration is well known to turn
$C$ into a {\em filtered\/} coaugmented differential graded
coalgebra; thus, in particular, $\mathrm F_0C = R$. We recall that
$C$ is said to be {\em cocomplete\/} when $C=\cup \mathrm F_nC$.

Write $s$ for the {\em suspension\/} operator as usual and
accordingly $s^{-1}$ for the {\em desuspension\/} operator. Thus,
given the chain complex $X$, $(sX)_j = X_{j-1}$, etc., and the
differential ${d\colon sX \to sX}$ on the suspended object $sX$ is
defined in the standard manner so that $ds+sd=0$.

Given two chain complexes $X$ and $Y$, recall that
$\mathrm{Hom}(X,Y)$ inherits the structure of a chain complex by
the operator $D$ defined by
\begin{equation}
D \phi = d \phi -(-1)^{|\phi|} \phi d
\end{equation}
where $\phi$ is a homogeneous homomorphism from $X$ to $Y$ and
where $|\phi|$ refers to the degree of $\phi$.

Let $\fra g$ be a chain complex having the property that
the cofree coaugmented differential graded cocommutative
coalgebra $\Sigm^{\mathrm c}[s\fra g]$ on the suspension $s\fra g$
of $\fra g$ exists.
This happens to be the case, e.g.,  when $\fra g$ is projective
as a graded $R$-module.
Let
\[
\tau_{\fra g}\colon \Sigm^{\mathrm c}[s\fra g]\longrightarrow \fra
g
\]
be the composite of the canonical projection to $\Sigm_1^{\mathrm
c}[s\fra g] =s\fra g$ with the desuspension map. Suppose that
$\fra g$ is endowed with a graded skew-symmetric bracket $[\,
\cdot \, , \, \cdot \, ]$ that is compatible with the differential
but not necessarily a graded Lie bracket, i.~e. does not
necessarily satisfy the graded Jacobi identity. Let $C$ be a
coaugmented differential graded cocommutative coalgebra. Given
homogeneous morphisms $a,b \colon C \to \fra g$, with a slight
abuse of the bracket notation $[\, \cdot \, , \, \cdot \, ]$, the
{\em cup bracket\/} $[a, b]$ is given by the composite
\begin{equation*}
\begin{CD} C @>{\Delta}>> C\otimes C @>{a\otimes b}>>\fra g
\otimes\fra g @> {[\cdot,\cdot]}>> \fra g.
\end{CD}
\end{equation*}
The cup bracket $[\, \cdot \, , \, \cdot \, ]$ is well known to be
a graded skew-symmetric bracket on $\mathrm{Hom}(C,\fra g)$ which
is compatible with the differential on $\mathrm{Hom}(C,\fra g)$.
Define the coderivation
\[
\partial\colon\Sigm^{\mathrm
c}[s\fra g] \longrightarrow \Sigm^{\mathrm c}[s\fra g]
\]
on $\Sigm^{\mathrm c}[s\fra g]$  by the requirement
\begin{equation}
\tau_{\fra g} \partial = \tfrac 12 [\tau_{\fra g},\tau_{\fra g}]
\colon \Sigm_2^{\mathrm c}[s\fra g] \to \fra g. \label{proc0333}
\end{equation}
Then $D\partial\  (=d\partial + \partial d) = 0$ since the bracket
on $\fra g$ is supposed to be compatible with the differential
$d$. Moreover, {\sl the bracket on $\fra g$ satisfies the graded
Jacobi identity if and only if $\partial\partial = 0$, that is, if
and only if $\partial$ is a coalgebra perturbation of the
differential $d$ on\/} $\Sigm^{\mathrm c}[s\fra g]$, cf. e.~g.
\cite{huebstas}.

We now suppose that the graded bracket $[\, \cdot \, , \, \cdot \,
]$ on $\fra g$ turns $\fra g$  into a differential graded Lie
algebra and continue to denote the resulting coalgebra
perturbation by $\partial$, so that $\Sigm_{\partial}^{\mathrm
c}[s\fra g]$ is a coaugmented differential graded cocommutative
coalgebra; in fact, $\Sigm_{\partial}^{\mathrm c}[s\fra g]$ is
then precisely the ordinary {\scs
C(artan-)C(hevalley-)E(ilenberg)\/} or {\em classifying\/}
coalgebra for $\fra g$ and, following \cite{quilltwo} (p.~291), we
denote it by $\mathcal C[\fra g]$ (but the construction given
above is different from that in \cite{quilltwo} which, in turn, is
carried out only over a field of characteristic zero).
Furthermore, given a coaugmented differential graded cocommutative
coalgebra $C$, the cup bracket turns $\mathrm{Hom}(C,\fra g)$ into
a differential graded Lie algebra. In particular,
$\mathrm{Hom}(\Sigm^{\mathrm c},\fra g)$ and $\mathrm{Hom}(\mathrm
F_n\Sigm^{\mathrm c},\fra g)$ ($n\geq 0$) acquire differential
graded Lie algebra structures.

Given a  coaugmented differential graded cocommutative coalgebra
$C$ and a differential graded Lie algebra $\fra h$, a {\em Lie
algebra twisting cochain\/} $t \colon C \to \fra h$ is a
homogeneous morphism of degree $-1$ whose composite with the
coaugmentation  map is zero and which satisfies
\begin{equation}
Dt = \tfrac 12 [t,t] , \label{master}
\end{equation}
cf. \cite{moorefiv}, \cite{quilltwo}. In particular, relative to the
graded Lie bracket $\ppppartial$ on $\fra g$, the morphism
$\tau_{\fra g}\colon
\mathcal C[\fra g] \to \fra g$ is a Lie algebra twisting cochain,
the {\scs C(artan-)C(hevalley-)E(ilenberg)\/} or {\em universal\/}
Lie algebra twisting cochain for $\fra g$. It is, perhaps, worth
noting that, when $\fra g$ is viewed as an abelian differential
graded Lie algebra relative to the zero bracket, $\Sigm^{\mathrm
c}[s\fra g]$ is the corresponding {\scs CCE\/} or {\em
classifying\/} coalgebra and $\tau_{\fra g}\colon \Sigm^{\mathrm
c}[s\fra g] \to \fra g$ is still the universal Lie algebra
twisting cochain.

\begin{Remark} The terms
{\em master equation\/},
{\em Maurer-Cartan equation\/},
{\em Lie algebra twisting cochain\/}, and
{\em integrability condition\/}
all refer to the same mathematical object.
Historically the Maurer-Cartan equation came first.
\end{Remark}

At the risk of making a mountain out of a molehill, we note that,
in \eqref{proc0333} and \eqref{master} above, the factor $\tfrac
12$ is a mere matter of convenience. The correct way of phrasing
graded Lie algebras when the prime 2 is not invertible in the
ground ring is in terms of an additional operation, the {\em
squaring\/} operation $\mathrm {Sq}\colon \fra g_{\mathrm{odd}}
\to \fra g_{\mathrm{even}}$ and, by means of this operation, the
factor $\tfrac 12$ can be avoided. Indeed, in terms of this
operation, the equation \eqref{master} takes the form
\[
Dt=\mathrm{Sq}(t).
\]
For intelligibility, we will follow the standard convention, avoid
spelling out the squaring operation explicitly, and keep the
factor $\tfrac 12$. A detailed description of the requisite
modifications when the prime 2 is not invertible in the ground ring is
given in \cite{pertlie2}.

Finally we comment on the usage of the terminology \lq\lq minimal\rq\rq:
Given an augmented differential graded algebra $A$ over a field $k$,
with augmentation map $\varepsilon \colon A \to k$
and augmentation ideal $\III A$,
its graded vector space
 of {\em indecomposables\/} $Q(A)$ is the cokernel of the canonical map
$\III A \otimes \III A \to \III A$ induced by the multiplication map of $A$;
since $A$ is a differential graded algebra, this cokernel inherits a
differential, and the augmented differential graded algebra $A$ is said to be
{\em minimal\/} when it is cofibrant and when
the differential on the indecomposables $Q(A)$
is zero. Any connected differential graded algebra has a canonical 
augmentation.
In rational homotopy theory, a {\em minimal model\/} of
a connected differential graded commutative algebra $A$
is a minimal differential graded commutative algebra $mA$ together with
a morphism $mA \to A$ of differential graded algebras
which is an isomorphism on homology.
Likewise, 
the differential graded Lie algebra $L$ is said to be
{\em minimal\/} when it is cofibrant and when
the differential on the abelianization $Q(L)$
is zero. A {\em minimal model\/} of
a connected differential graded Lie algebra $L$
is a minimal differential graded Lie algebra $mL$ together with
a morphism $mL \to L$ of differential graded Lie algebras
which is an isomorphism on homology.
There are also corresponding notions of minimal differential graded
coalgebra and of minimal model for a differential graded coalgebra.
See e.~g. \cite{neiseone} (Section 5) for details and more references. 
Over a local ring $R$, with maximal ideal $\mathfrak m \subseteq R$,
a free resolution
\begin{equation*}
\begin{CD}
0 @<<< M  @<{\varepsilon}<< F_0  @<d<< F_1  @<d<<
\end{CD}
\end{equation*}
is said to be minimal when $d(F_j) \subseteq \mathfrak m F_{j-1}$
for $j \geq 1$.
The meaning of the term \lq\lq minimal\rq\rq\ in the present paper 
refers to $A_{\infty}$-algebras, see Section \ref{infalg} below.
While in rational homotopy theory, 
a homology algebra is not necessarily a Sullivan algebra,
suitably interpreted, the notion of minimality
of $A_{\infty}$-algebras
is consistent with the usage of the concept of minimality
in rational homotopy theory.
See Remarks \ref{differ} and \ref{chenmini} below.

\section{$A_{\infty}$-algebras}
\label{infalg}

To introduce language and notation we reproduce a precise definition of
an  $A_{\infty}$-algebra and of a morphism of
 $A_\infty$-algebras, cf. \cite{stashone}. 
Our convention is that a differential 
lowers degree by 1.

An $A_\infty$-algebra over the ground ring $R$ is a graded
$R$-module $A$ equipped with a family
$\{m_n\}_{n=1}^\infty$ of $R$-multilinear maps 
\[
m_n \colon A^{\otimes n}\longrightarrow A
\]
of degree $n-2$ that satisfy the identities
\begin{equation}
 \sum_{r+s+t=n} (-1)^{r+st} m_{r+1+t}(\id^r\otimes m_s\otimes \id^t)=0
\label{Stn}
\end{equation}

A morphism $f\colon A\to B$ of $A_\infty$-algebras is a family
$\{f_n\}_{n=1}^\infty$ of $R$-multilinear maps 
\[
f_n\colon A^{\otimes n}\longrightarrow B
\]
of degree $n-1$ that satisfy the identities
\begin{equation}
\sum_{r+s+t=n}(-1)^{r+st}f_{r+1+t}(\id^r\otimes m_s\otimes\id^t) =
\sum(-1)^wm_q(f_{i_1}\otimes\dots\otimes f_{i_q})
\label{Stmn}
\end{equation}
where $i_1+\dots+i_q=n$ and
$w=(q-1)(i_1-1)+\dots+2(i_{q-2}-1)+(i_{q-1}-1)$.

We now reproduce the familiar equivalent 
description of an $A_{\infty}$-algebra 
structure as a coalgebra perturbation.
To save trouble, we will do this only for the {\em supplemented\/} case.
Thus let $\HHH$ be a  
graded $R$-module
which comes with a direct sum decomposition
$\HHH = \III \HHH \oplus R$ of 
graded $R$-modules.
We view $\HHH$ as a 
graded algebra
with zero multiplication on $\III\HHH$,
so that $\III\HHH$ can then be interpreted as 
the augmentation ideal of $\HHH$.
The 
graded {\em tensor\/} coalgebra
$\mathrm T^{\mathrm c}[s\III\HHH]$
(with zero differential)
is then the corresponding reduced {\em bar\/} construction for $\HHH$;
let $\tau_{\HHH}\colon \mathrm T^{\mathrm c}[s\III\HHH]\to \HHH$
be the universal bar construction
twisting cochain, that is, the 
canonical projection to $s\III\HHH$,
followed by the desuspension mapping.

For $j \geq 1$, 
let
\[
m_j\colon (\III\HHH)^{\otimes j}
\longrightarrow \III\HHH
\]
be a homogeneous degree $j-2$ operation 
and define the coderivation 
\[
\mathcal D^{j-1} \colon
 \mathrm T^{\mathrm c}[s\III\HHH]
\longrightarrow \mathrm T^{\mathrm c}[s\III\HHH]
\]
by the identity
\[
m_j=s^{-1}\circ \mathcal D^{j-1}\circ s^{\otimes j}
\colon (\III\HHH)^{\otimes j}
\longrightarrow \III\HHH.
\]
For convenience, we write $d= \mathcal D^0$.
Then
\[
\mathcal D=\mathcal D^1+ \mathcal D^2 + \ldots\colon
 \mathrm T^{\mathrm c}[s\III\HHH]
\longrightarrow \mathrm T^{\mathrm c}[s\III\HHH]
\]
is a coderivation, and so is the sum
\[
d+\mathcal D=d+\mathcal D^1+ \mathcal D^2 + \ldots\colon
 \mathrm T^{\mathrm c}[s\III\HHH]
\longrightarrow \mathrm T^{\mathrm c}[s\III\HHH].
\]

\begin{Proposition} {\rm (i)}
The family $\{m_j\}_j$ turns $\HHH$ into an (augmented)
$A_{\infty}$-algebra if and only if
$d+\mathcal D$ is  a differential, that is, if and only if
\[
d\mathcal D+ \mathcal D d + \mathcal D\mathcal D=0. 
\]
{\rm (ii)} Given the augmented $A_{\infty}$-algebras $A$ and $B$,
a family
$\{f_j\}_j$ of $R$-multilinear maps 
\[
f_j\colon (\III A)^{\otimes n}\longrightarrow \III B
\]
of degree $j-1$ 
is a morphism of (augmented)
$A_{\infty}$-algebras if and only if the constituents of the family
combine to a morphism
\[
(\mathrm T^{\mathrm c}[s\III A],d +\mathcal D)
\longrightarrow
(\mathrm T^{\mathrm c}[s\III B],d +\mathcal D)
\]
of differential graded coalgebras.
\end{Proposition}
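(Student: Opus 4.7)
The plan is to reduce both parts to the universal property of the cofree graded tensor coalgebra $\mathrm T^{\mathrm c}[s\III\HHH]$: any coderivation on it, and any morphism of coaugmented graded coalgebras out of it into another such, is uniquely determined by its corestriction, i.e.\ its composition with the canonical projection onto $s\III\HHH$. In particular, vanishing of a coderivation is equivalent to vanishing of its corestriction. The constructions defining $\mathcal D^{j-1}$ from $m_j$, and (for (ii)) a coalgebra morphism from the $f_j$, are precisely the instances of this universal property applied to the suspended maps $s\circ m_j\circ s^{-\otimes j}$ and $s\circ f_j\circ s^{-\otimes j}$ respectively.

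For part (i), I would evaluate $\tau_{\HHH}\circ(d+\mathcal D)\circ(d+\mathcal D)$ on an element of $(s\III\HHH)^{\otimes n}$. Expanding by bilinearity, each summand is a composition of some $\mathcal D^{s-1}$ acting on $s$ consecutive tensor factors with a further coderivation contracting the resulting length-$(r+1+t)$ word into a single letter; after projecting to $s\III\HHH$ and desuspending, only those terms whose outer operator contracts the block containing the image of the inner one survive. Each such term contributes $m_{r+1+t}(\id^r\otimes m_s\otimes\id^t)$, with the Koszul sign accumulated from commuting $\mathcal D^{s-1}$ past the $r$ preceding suspended factors and from the interchange of the suspensions $s^{\otimes \bullet}$ with the operations; this sign simplifies to $(-1)^{r+st}$. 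Summing over decompositions $n=r+s+t$ yields the left-hand side of \eqref{Stn}, and since $(d+\mathcal D)^2$ is itself a coderivation, it vanishes iff each such sum vanishes.

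For part (ii), the analogous universal property asserts that a morphism $\phi\colon \mathrm T^{\mathrm c}[s\III A]\to \mathrm T^{\mathrm c}[s\III B]$ of coaugmented graded coalgebras is determined by $\tau_B\circ\phi$, and conversely any sequence of homogeneous $R$-linear maps $(s\III A)^{\otimes n}\to s\III B$ of degree zero assembles uniquely to such a $\phi$. Translating $\{f_j\}$ through $s$ and $s^{-1}$ gives precisely this data. The condition $\phi\circ(d+\mathcal D_A)=(d+\mathcal D_B)\circ\phi$ is equivalent, by the universal property, to the vanishing of the corresponding corestriction on $(s\III A)^{\otimes n}$. Parsing the left-hand corestriction produces, after desuspension, the left-hand side of \eqref{Stmn}; parsing the right-hand corestriction uses the comultiplication of $\mathrm T^{\mathrm c}[s\III A]$ to split $n=i_1+\cdots+i_q$, applies the $f_{i_k}$ blockwise, and then contracts by $m_q$, yielding $\sum m_q(f_{i_1}\otimes\cdots\otimes f_{i_q})$; the Koszul sign from suspending past the maps $f_{i_k}$ of degrees $i_k-1$ works out to $(-1)^w$ with $w=(q-1)(i_1-1)+\cdots+(i_{q-1}-1)$.

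The substantive obstacle in each part is the sign verification: one must check that the Koszul signs produced by repeated (de)suspensions of homogeneous maps of varying degrees reproduce exactly $(-1)^{r+st}$ in \eqref{Stn} and $(-1)^w$ in \eqref{Stmn}. The rest of the argument is the formal cofree-coalgebra reduction together with the observation that composing the universal twisting cochain $\tau_{\HHH}$ with a coderivation or morphism exactly extracts, component by component, the operations $m_j$ or $f_j$ encoded in its definition.
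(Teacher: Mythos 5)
Your argument is correct, and it is the standard one: the paper states this Proposition without proof, as the ``familiar equivalent description'' of an $A_\infty$-structure, so there is no in-paper proof to diverge from; your cofreeness reduction is exactly the argument that underlies the claim. Two small points are worth making explicit when you write it up: the reason $(d+\mathcal D)^2$ is determined by its corestriction is that it equals $\tfrac12[d+\mathcal D,d+\mathcal D]$ and is hence itself a coderivation of the tensor coalgebra, and likewise in part (ii) the difference $\phi\circ(d+\mathcal D)-(d+\mathcal D)\circ\phi$ is a coderivation along $\phi$, so the same ``vanishes iff its corestriction vanishes'' principle applies to it. The sign bookkeeping you defer is routine and does come out to $(-1)^{r+st}$ in \eqref{Stn} and $(-1)^w$ in \eqref{Stmn}; indeed those signs in Stasheff's conventions are precisely the Koszul signs produced by conjugating the degree $-1$ coderivation components by the suspensions.
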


Thus, when  $\{m_j\}_j$ turns $\HHH$ into an (augmented)
$A_{\infty}$-algebra,
in particular, $m_1$ is a differential on $\HHH$ and on $\III \HHH$,
and $d$ is a differential on 
$\mathrm T^{\mathrm c}[s\III\HHH]$,
in fact, the differential induced by that on $\III \HHH$.
The special case where only $m_1$ and $m_2$ are non-zero
is that of an ordinary differential graded algebra structure,
and 
$(\mathrm T^{\mathrm c}[s\III\HHH],d +\mathcal D)$
is then the ordinary reduced bar construction
$\rbar \HHH$.

Kadeishvili introduced the terminology {\em minimal\/}
for an $A_\infty$-algebra $(M,\{m_i\})$ 
having  $m_1$ zero, i.e. trivial differential.  
Minimal $A_\infty$-algebras behave
similar to Sullivan's minimal differential graded 
algebras: each weak equivalence of
minimal  $A_\infty$-algebras is an isomorphism.
See also Remark \ref{chenmini} below.

We will henceforth consider $\III \HHH$ as a chain complex,
use the notation
\begin{equation}
\rbar_{\mathcal D}\HHH=
(\mathrm T^{\mathrm c}[s\III\HHH], d + \mathcal D) ,
\label{shaa}
\end{equation}
and occasionally refer to $\rbar_{\mathcal D}\HHH$
as the {\em standard construction\/}.
It is also customary to write
$\widetilde \rbar\HHH$ and to refer to the
{\em bar tilde\/} construction.
Apart from Section \ref{kadeal} below,
we will henceforth exclusively use the description
of an $A_{\infty}$-algebra structures on the chain complex $\HHH$ in terms of 
the coalgebra perturbation $\mathcal D$ on 
the associated differential graded coalgebra
$(\mathrm T^{\mathrm c}[s\III\HHH], d)$.
Likewise we will exclusively
use the description
of an $A_{\infty}$-coalgebra structure in terms of 
the corresponding algebra perturbation on the associated
differential graded algebra and
we will use the description
of an $L_{\infty}$-algebra structure merely in terms of 
the corresponding coalgebra perturbation on the associated
differential graded coalgebra.

\section{Kadeishvili's minimality theorem for algebras}
\label{kadeal}

In \cite{kade}, Kadeishvili  studied the homology of 
a fiber bundle with structure group $G$ and fiber $F$.
He noticed  that the Pontrjagin ring structure of 
the homology $\mathrm H_*(G)$ and the action of 
$\mathrm H_*(G)$ on $\mathrm H_*(F)$
in general fail to recover the geometry of the action.
To fix this failure,
he then constructed  certain higher
operations
\[
f^i: \mathrm H_*(G)\otimes ... (i\ \mathrm{times}) ...\otimes \mathrm H_*(G)\to \mathrm H_*(G),\
i=3,4, \ldots,
\]
and homomorphisms
\[
A^i: \mathrm H_*(G)\otimes ... (i\ \mathrm{times}) ... \otimes \mathrm H_*(G)\to C_*(G),\
i=1,2,3,4, \ldots .
\]
which are solutions of certain equations. For example, $f=f^3+f^4+ \ldots$, 
interpreted as a Hochschild cochain in $C^*(\mathrm H^*(G),\mathrm H^*(G))$,
satisfies the condition
\[
\delta f=f\cup_1 f
\]
where the operation
\[
\cup_1 \colon (a,b) \mapsto a\cup_1b
\] 
refers to the operation in the Hochschild (cochain) complex
introduced by Gerstenhaber.
Kadeishvili referred to this operation as a \lq\lq cup-one\rq\rq\ product
since it has the same properties as Steenrod's $\cup_1$-product, 
and he called such an $f$ 
{\em Hochschild twisting cochain\/} (page 3 of \cite{kade}).

With hindsight we see that Kadeishvili's construction just explained
yields an
$A_\infty$-algebra structure  $(\mathrm H_*(G),\{f^i\})$ and a morphism
(weak equivalence) of $A_\infty$-algebras
$\{A^i\}:(\mathrm H_*(G),\{f^i\})\to C_*(G)$.
While, at the time of writing \cite{kade}, 
Kadeishvili did not know about Stasheff's notion of
$A_\infty$-algebra  he realized thereafter that the condition
$\delta f=f\cup_1 f$ is exactly Stasheff's defining condition for
an $A_\infty$-algebra $(M,\{m_i\})$ with $m_1=0$.
This led to the paper \cite{kadeione}.
Here is the main result thereof,
valid for a general differential graded algebra, not necessarily of the kind
$C_*(G)$ for a group $G$.

\begin{Theorem}[Minimality theorem]\label{kadea}
Let $A$ be a differential graded algebra
over a field. There is an $A_\infty$-algebra structure on
$\HHHH(A)$ and an $A_\infty$-algebra quasi-isomorphism $f\colon\HHHH(A)\to A$
such that $f_1$ is a cycle-choosing quasi-isomorphism of 
chain complexes where the differential $m_1$ 
on $\HHHH(A)$ is zero and $m_2$ is a
strictly associative multiplication
induced by the multiplication in $A$.
The resulting structure is unique up to quasi-isomorphism.
When  $A$ has a unit, then the structure and quasi-isomorphism can be
chosen to be strictly unital.
\end{Theorem}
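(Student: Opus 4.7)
My plan is to deduce the theorem from the coalgebra/perturbation formalism set up just before the statement, together with the tensor trick and a classical contraction. Over a field, the graded vector space $H(A)$ is free, so I can split the subspace of cycles and the boundaries in each degree of $A$ and thereby construct a cycle-choosing chain map $f_1 \colon H(A) \to A$, a projection $\pi \colon A \to H(A)$ with $\pi f_1 = \mathrm{Id}$, and a degree $1$ operator $h \colon A \to A$ satisfying $Dh = \mathrm{Id} - f_1 \pi$ and the side conditions $\pi h = 0$, $h f_1 = 0$, $hh = 0$. This gives a contraction of $A$ onto $H(A)$ viewed as a chain complex with zero differential.

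Next I apply the tensor trick: the contraction of chain complexes lifts to a contraction of the tensor coalgebras $\mathrm T^{\mathrm c}[s\bar A]$ and $\mathrm T^{\mathrm c}[s\bar H(A)]$, with both carrying the tensor coalgebra diagonal and the differential induced by the internal differentials. The reduced bar construction $\rbar A = (\mathrm T^{\mathrm c}[s\bar A], d + \mathcal D_A)$ is obtained from the undeformed $(\mathrm T^{\mathrm c}[s\bar A], d)$ by the coalgebra perturbation $\mathcal D_A$ of weight one encoding the associative multiplication on $A$. The idea is to transport $\mathcal D_A$ across the contraction by the (coalgebra) perturbation lemma, producing a coalgebra perturbation $\mathcal D$ of the tensor coalgebra $(\mathrm T^{\mathrm c}[s\bar H(A)], 0)$ together with a morphism of differential graded coalgebras
\[
F \colon (\mathrm T^{\mathrm c}[s\bar H(A)], \mathcal D) \longrightarrow (\mathrm T^{\mathrm c}[s\bar A], d + \mathcal D_A) = \rbar A .
\]
By Proposition~3.1, $\mathcal D$ is exactly an $A_\infty$-structure $\{m_n\}$ on $H(A)$ with $m_1 = 0$, and $F$ is an $A_\infty$-morphism $f = \{f_n\}$ with $f_1$ the originally chosen cycle-choosing quasi-isomorphism. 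Unwinding the perturbation lemma to first order identifies $m_2$ with the composite $\pi \circ \mu_A \circ (f_1 \otimes f_1)$, which is an associative product on $H(A)$ agreeing with the multiplication induced by $A$; higher $m_n$ and $f_n$ are the iterated HPT corrections, which can be written explicitly by the geometric series $(1 - \mathcal D_A h)^{-1}$ in the tensor coalgebra and, unpacked, give the inductive formulas of Kadeishvili's original paper.

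For uniqueness, given two such $A_\infty$-structures on $H(A)$ together with their morphisms to $A$, the same HPT/tensor-trick procedure, applied to the two resulting bar constructions and the identity of $H(A)$ as an initial datum, produces an $A_\infty$-quasi-isomorphism between them. For the unital case, I would insist that $f_1$ send the unit of $H(A)$ to the unit of $A$ and that the splitting be chosen compatibly so that $h$ vanishes on $1$ and on everything of the form $1 \cdot a$ or $a \cdot 1$; then the inductive formulas automatically produce strictly unital $m_n$ and $f_n$ because the offending terms are annihilated by $h$ or $\pi$.

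The main obstacle is the bookkeeping in the induction: one has to show that the candidate obstruction cycle at level $n$ really is a $d$-cycle, so that feeding it through $h$ and $\pi$ yields the next $f_n$ and $m_n$ satisfying the Stasheff identity \eqref{Stn}. In the coalgebra formulation this is a one-line consequence of $(d + \mathcal D_A)^2 = 0$ transported by the contraction, but verifying that the side conditions $\pi h = 0$, $h f_1 = 0$, $hh = 0$ translate precisely into the $A_\infty$-identities rather than weaker relations, and that all signs come out right, is the genuine computational heart of the proof.
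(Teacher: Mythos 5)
Your argument is correct, but it is not the proof the paper gives for this statement: the paper reproduces Kadeishvili's original inductive construction, in which one writes down the obstruction element $\Psi_n$ obtained from the Stasheff identities with the terms $f_1m_n$ and $m_1f_n$ removed, checks by a \lq\lq tedious but straightforward\rq\rq\ calculation that $\Psi_n$ is a $d$-cycle, sets $m_n=[\Psi_n]$, and then uses the field hypothesis to choose $f_n$ bounding $f_1m_n-\Psi_n$. What you propose instead is exactly the HPT route that the paper develops later, in Theorem \ref{alg} and Section \ref{algebra}: choose a contraction of $A$ onto $\HHHH(A)$, lift it by the tensor trick to a contraction of tensor coalgebras, and transport the bar-construction perturbation by the coalgebra perturbation lemma; Remark \ref{kami1} makes the dictionary explicit ($\tau^j\leftrightarrow f_j$, $\mathcal D^j\leftrightarrow m_j$). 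The trade-off is as the paper describes it: the inductive proof needs only degreewise linear choices over a field, at the cost of unorganized sign bookkeeping and non-canonical bounding chains at every stage; your HPT version requires fixing a contracting homotopy $h$ once and for all, but then the cofree coalgebra does the combinatorial bookkeeping, the formulas become closed and algorithmic, and the argument works over any ground ring for which $\HHHH(A)$ is free. One presentational point: your closing worry that the side conditions might only yield \lq\lq weaker relations\rq\rq\ than the $A_\infty$-identities is unfounded in this formulation --- the coalgebra perturbation lemma delivers a coderivation $\mathcal D$ with $(d+\mathcal D)^2=0$ outright, and by the Proposition of Section \ref{infalg} that identity \emph{is} the full system of Stasheff relations, so there is nothing left to re-verify at the end; the side conditions are consumed in the proof of the perturbation lemma itself, not afterwards.
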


In particular, the
$A_\infty$-algebra structure on homology resulting from the minimality
theorem is minimal and this structure, which is of
course  not uniquely determined, is unique up to isomorphism in
the category of $A_\infty$-algebras.

To compare the original approach with other developments,
we partly reproduce the proof in \cite{kadeione}.

\begin{proof}
 Since $A$ is a differential graded 
algebra, its associated
$A_\infty$-structure is encapsulated in  the operations
$m_1$ and $m_2$, the higher operations being zero. We shall denote $m_1$
by $d$ and refer to $m_2$ by the notation $\,\cdot\,$ 
or simply by juxtaposition.

To start an inductive construction of an $A_\infty$-structure on
$\HHHH(A)$, we pick $m_1=0$ and take $m_2$ to be
the induced strictly associative multiplication on $\HHHH(A)$.
Furthermore, we take $f_1$ to be some linear map
$\HHHH(A)\to A$ that picks a cycle in each homology class.

Given $a_1,a_2 \in A$, let 
\[
\Psi_2(a_1,a_2)=f_1(a_1a_2)-f_1(a_1)f_1(a_2).
\]
This yields a
boundary, since $f_1(a_1a_2)$ is defined to be a representative
cycle of the homology class containing $f_1(a_1)f_1(a_2)$. Hence,
$\Psi_2(a_1,a_2)=dw$ for some $w$.
Abstracting from the particular elements $a_1$ and $a_2$, since we
are over a field, we find a morphism $f_2$ such that
$df_2=\Psi_2$.

Now, let $n>2$. Given $a_1,\dots,a_n \in A$,
let 
  \begin{multline*}
    \Psi_n(a_1,\dots,a_n)=
    \sum_{s=1}^{n-1}(-1)^{\varepsilon_1(a_1,\dots,a_n,s)}
    f_s(a_1,\dots,a_s)\cdot f_{n-s}(a_{s+1},\dots,a_n) + \\
    \sum_{j=2}^{n-1}\sum_{k=0}^{n-j}(-1)^{\varepsilon_2(a_1,\dots,a_n,k,j)}
    f_{n-j+1}(a_1,\dots,a_k,m_j(a_{k+1},\dots,a_{k+j}),\dots,a_n)
  \end{multline*}
where the expressions
\begin{align*}
\varepsilon_1(a_1,\dots,a_n,s)&=s+(n-s+1)(|a_1|+\dots+|a_s|)\\
\varepsilon_2(a_1,\dots,a_n,k,j)&=k+j(n-k-j+|a_1|+\dots+|a_k|)
\end{align*}
are the signs in \eqref{Stmn}, adjusted according
to the Eilenberg-Koszul convention.
The term $\Psi_n$ arises from the identity
\eqref{Stmn}, with the two terms $f_1m_n$ and $m_1f_n$ removed. 
To complete the inductive step we must exhibit
suitable terms $m_n$ and $f_n$ in such a way that the identity
\eqref{Stmn} holds.

Tedious but straightforward calculation shows that the element
$\Psi_n(a_1,\dots,a_n)$ is a $d$-cycle, and we take  
  \[
   m_n(a_1,\dots,a_n)= [\Psi_n(a_1,\dots,a_n)] \in \HHHH(A) .
  \]
This yields the operation $m_n$ on $\HHHH(A)$.
Since now $f_1(m_n(a_1,\dots,a_n))$ and $\Psi_n(a_1,\dots,a_n)$
are in the same class, there is some  $w\in A$
such that 
\[
f_1(m_n(a_1,\dots,a_n))-\Psi_n(a_1,\dots,a_n)=dw. 
\]
Abstracting
from the particular elements $a_1,\ldots,a_n$, since we
are over a field, we find a morphism $f_n$ such that
\[
df_n=\Psi_n.
\]
Thus $m_n$ and $f_n$ match the definitions of the corresponding
constituents of an
$A_{\infty}$-algebra and of a morphism of $A_{\infty}$-algebras,
respectively.
\end{proof}

M.~Vejdemo-Johansson
has observed that this proof can be translated 
into an algorithm for the
computation of the $A_\infty$-structure maps \cite{vejjoh}.
This justifies the  claim made earlier that
Kadeishvili's construction can be made explicit
(by means of a choice of contracting homotopy, see
Remark \ref{kami1} below).

\begin{Remark}\label{differ} Let $A$ be a connected minimal differential graded
commutative algebra over the rationals.
Theorem {\rm \ref{kadea}} applies to it and yields a \lq\lq minimal\rq\rq\ 
$A_{\infty}$-structure on $\HHHH(A)$, associated with $A$.
However the two notions of minimality clearly differ
unless $A$ is formal.
\end{Remark}

In 1982, Kadeishvili extended this construction
and arrived at a more general
\lq\lq minimality theorem\rq\rq\ 
saying that, over a field, 
the homology  $\HHHH(A)$ of a general $A_{\infty}$-algebra $A$
acquires an  $A_{\infty}$-algebra structure
that is equivalent to $A$ 
\cite{kadeifiv}.

\begin{Remark}\label{opera} Kadeishvili's original problem, that is,
that of constructing a small model for the chains on the total space of a fiber bundle,
has received much attention in the literature, as has the problem,
given a group $G$, to isolate, under suitable circumstances, a suitable 
structure on the homology $\mathrm H(X)$ of a $G$-space $X$ such that $X$ and
$\mathrm H(X)$ are equivalent in the  $A_{\infty}$-sense.
In the de Rham setting, this problem was studied, e.g., in {\rm \cite{gorkomac}},
where the corresponding $A_{\infty}$-structure on de Rham cohomology
is encoded in terms of what are referred to there as {\em cohomology operations\/}.
In {\rm \cite{koszultw}}, we have explained how equivariant de Rham theory can be subsumed
under relative homological algebra. This includes an explanation of
those $A_{\infty}$-structures on de Rham cohomology.
\end{Remark}

In the situation of the minimality theorem, Theorem \ref{kadea} above, when
the differential graded algebra $A$ is graded commutative,
the resulting $A_\infty$-algebra structure on
$\HHHH(A)$ has peculiar features encoded by
Kadeishvili in the notion of $CA_\infty$-algebra, meaning that,
in this case, the bar tilde construction is not only 
a differential graded coalgebra but also
an algebra with respect to the shuffle product, 
and the two structures combine to that of a differential graded
bialgebra. Later this kind of structure has been christened
$C_\infty$-structure.
In the special situation where
$A$ is the algebra of rational cochains on a
space $X$, the resulting $C_{\infty}$-algebra
structure on $\mathrm H^*(X,Q)$  determines the
rational homotopy type of $X$.
Kadeishvili has worked this out in \cite{kadeisev};
an extended version can be found in \cite{kadeinin}.

The general situation is this: Given an augmented $C_\infty$-algebra $A$,
the structure being given in terms of its standard construction
$\rbar_{\partial}A$, by the very definition of $C_\infty$-structure,
the shuffle multiplication turns $\rbar_{\partial}A$
into a graded commutative differential graded Hopf algebra, the space of
indecomposables relative to the algebra structure is a {\em differential graded
Lie coalgebra\/} $\mathrm L^{\mathrm c}$, 
in fact, a {\em perturbation\/} of the {\em cofree differential graded
Lie coalgebra\/} $\mathrm L^{\mathrm c}(s\III A)$ {\em cogenerated by\/} $s\III A$, 
and the projection $\rbar_{\partial}A \to \mathrm L^{\mathrm c}$
actually spells out the differential graded coalgebra
$\rbar_{\partial}A$ as the {\em universal coalgebra\/}
$\mathrm U^{\mathrm c}[\mathrm L^{\mathrm c}]$ {\em cogenerated by\/} $\mathrm L^{\mathrm c}$,
the space $\mathrm L^{\mathrm c}$ necessarily being that of indecomposables relative to the
graded commutative algebra structure.
More formally, the {\em standard construction for the augmented\/}
$C_{\infty}$-{\em algebra $A$ boils down a perturbation of the 
cofree differential graded
Lie coalgebra $\mathrm L^{\mathrm c}(s\III A)$ cogenerated by\/} $s\III A$. 
In other words, the $C_{\infty}$-structure on $A$ is given by a 
{\em perturbation of the differential on the cofree differential graded
Lie coalgebra $\mathrm L^{\mathrm c}(s\III A)$ cogenerated by\/} $s\III A$. 

In the situation of Kadeishvili's observation  explained above,
the differential graded Lie coalgebra 
$\mathrm L^{\mathrm c}$ is the dual of the familiar minimal Lie algebra model in rational homotopy
theory. Indeed, the structure dual to that  of a $C_\infty$-structure
has been explored in the literature in the context of rational homotopy theory.
We will explain this in Remark
\ref{chenmini} below.

Lie coalgebras are still not widely known objects. See e.~g. \cite{michaeli}
for a thorough approach to ordinary (ungraded) Lie coalgebras.

\section{The Chen perturbation}\label{chenpert}

Let $X$ be a (connected) smooth manifold, and let $\mathcal A(X)$ be its 
ordinary de Rham algebra.
Our convention is that $\mathcal A(X)$ is graded by negative degrees,
that is,
$\mathcal A(X)_j=\mathcal A^{-j}(X)$  for $j \leq 0$.
Let $V$ be a graded vector space
which, to avoid unnecessary complications, we suppose to be of finite type
(that is, finite-dimensional in each degree),
and let $\widehat {\mathrm T}[V]$
denote the graded completion 
of the graded tensor algebra ${\mathrm T}[V]$
relative to the augmentation filtration.
Let 
$\widehat{\mathrm T}_{\mathcal A(X)}[V]$
be the de 
algebra of formal power series in a basis of
$V$ with coefficients in the
de Rham algebra $\mathcal A(X)$ of $X$. 
More formally, let $V^*$ be the graded dual of 
$V$---still of finite type---, 
consider the graded tensor coalgebra $\mathrm T^{\mathrm c}[V^*]$,
and let
\[
\widehat{\mathrm T}_{\mathcal A(X)}[V]=
\mathrm{Hom}(\mathrm T^{\mathrm c}[V^*], \mathcal A(X)).
\]
Thus, the algebra 
$\widehat{\mathrm T}_{\mathcal A(X)}[V]$  
is the appropriate completion 
\[
\mathcal A(X) \widehat{\otimes} {\mathrm T}[V]
\]
of the tensor product
$\mathcal A(X) \otimes {\mathrm T}[V]$.
In his paper \cite{chenfou}, 
Chen referred to $\widehat{\mathrm T}_{\mathcal A(X)}[V]$ 
as the {\em algebra of
$\widehat {\mathrm T}[V]$-valued differential forms on\/} $X$
and defined 
a $\widehat {\mathrm T}[V]$-valued {\em formal power series connection\/}
on $X$ to be a degree $-1$ element of
$\widehat{\mathrm T}_{\mathcal A(X)}[V]$.

Let $\HHHH$ be the de Rham cohomology of $X$, and let
\begin{equation}
   (\HHHH
     \begin{CD}
      \null @>{\nabla}>> \null\\[-3.2ex]
      \null @<<{\pi}< \null
     \end{CD}
    \mathcal A(X), h)
\label{cont0}
  \end{equation}
be a contraction of chain complexes.
Thus, any $\alpha \in \mathcal A(X)$ can be written as
\[
\alpha = (dh + hd + \nabla \pi) \alpha 
=  dh\alpha + \nabla \pi \alpha +hd \alpha
\]
in a unique fashion.
The resulting decomposition
\begin{equation}
\mathcal A(X) = d \mathcal A(X) \oplus \mathrm{ker} (h)  =
d \mathcal A(X) \oplus \mathcal H \oplus h \mathcal A(X) 
\label{hodge}
\end{equation}
where $\mathcal H = \nabla \mathrm H(\mathcal A(X))$
plays the role of the {\em Hodge decomposition\/} in Kodaira-Spencer
deformation theory.
On p.~19 
of \cite{nijritwo},
a decomposition of the kind \eqref{hodge}
(not phrased in the language of contractions)
is indeed referred to as a \lq\lq Hodge decomposition\rq\rq,
and on p.~187 of \cite{chenfou} it is remarked that
a Hodge decomposition of the de Rham complex
of a Riemannian manifold 
is a special case of a decomposition of the kind
\eqref{hodge},  $\mathcal H$ being the space of harmonic forms.

Let $\HH$ be the reduced real {\em homology\/} of $X$,
let $V=s^{-1}\HH$, and consider the algebra
$\widehat {\mathrm T}_{\mathcal A(X)}[s^{-1}\HH]$
of $\widehat {\mathrm T}[s^{-1}\HH]$-valued
differential forms on $X$.
We now recall, in the language of the present paper,
Chen's Theorem 1.3.1 in \cite{chenfou}.

\begin{Theorem}
\label{chen2} The contraction {\rm \eqref{cont0}} determines
a $\widehat {\mathrm T}[s^{-1}\HH]$-valued
formal power series connection
$
\tau \in \widehat {\mathrm T}_{\mathcal A(X)}[s^{-1}\HH]
$
and an algebra  differential $\partial$ on 
$\widehat {\mathrm T}[s^{-1}\HH]$
such that, when $d$ refers to the
ordinary de Rham differential,
relative to the total differential 
\[
d^{\otimes} = d \widehat{\otimes} \mathrm{Id} +  
\mathrm{Id} \widehat{\otimes} \partial
\]
on 
$
\widehat {\mathrm T}_{\mathcal A(X)}[s^{-1}\HH] =
\mathcal A(X) 
 \widehat{\otimes} {\mathrm T}[s^{-1}\HH],
$
the following master equation is satisfied:
\begin{equation}
d^{\otimes} \tau = \tau \tau.
\label{master0}
\end{equation}
\end{Theorem}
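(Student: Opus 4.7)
The plan is to build $\tau$ and $\partial$ by induction on tensor degree in $\widehat{\mathrm T}[s^{-1}\HH]$, using the contraction \eqref{cont0} at each step to split an obstruction cocycle into an exact part (which feeds the next component of $\tau$) and a harmonic part (which feeds the next component of $\partial$). Viewed dually, this is the coalgebra perturbation lemma --- the ``tensor trick'' --- applied to \eqref{cont0}: writing $\mathrm T^{\mathrm c}[s\HH^*]$ for the graded dual of $\widehat{\mathrm T}[s^{-1}\HH]$, a degree $-1$ map $\mathrm T^{\mathrm c}[s\HH^*]\to\mathcal A(X)$ is a twisting cochain exactly when its avatar in $\widehat{\mathrm T}_{\mathcal A(X)}[s^{-1}\HH]$ satisfies \eqref{master0}, and \eqref{cont0} provides precisely the data needed to extend such a twisting cochain one filtration step at a time.

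Expand $\tau=\tau_1+\tau_2+\cdots$ with $\tau_n\in\mathcal A(X)\widehat{\otimes}(s^{-1}\HH)^{\otimes n}$. For the base case I would take $\tau_1$ to be (a suspension shift of) the cycle-choosing map $\nabla\colon\HH\to\mathcal A(X)$; because $\nabla$ is a chain map and $\HH$ carries zero differential, $d\tau_1=0$, which is the master equation in tensor degree one. For the inductive step, assume $\tau_1,\dots,\tau_n$ together with the components of $\partial$ that feed tensor degrees $\leq n$ are in place, so that \eqref{master0} is satisfied modulo terms of tensor degree $\geq n+1$. Isolating the tensor-degree-$(n+1)$ component of the master equation, and moving the two unknowns ($d\tau_{n+1}$ and the new piece of $\partial$ acting on the single generator contributed by $\tau_1$) to the left, one obtains an identity of the form
\[
d\tau_{n+1}+\nabla\circ(\text{new piece of }\partial)=\Omega_{n+1},
\]
where $\Omega_{n+1}$ is determined entirely by the data produced at earlier stages. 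Granting that $\Omega_{n+1}$ is a $d$-cocycle, the contraction identity \eqref{co1} gives $\Omega_{n+1}=d(h\Omega_{n+1})+\nabla\pi\,\Omega_{n+1}$, and I would set $\tau_{n+1}:=h\Omega_{n+1}$ and declare the new piece of $\partial$ to be the one reproducing $\pi\Omega_{n+1}$ after composition with $\nabla$; the annihilation properties \eqref{side} make these choices unambiguous and compatible with all further inductive stages.

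The crux, and the main obstacle, is verifying at every stage that $\Omega_{n+1}$ is indeed a $d$-cocycle and that the $\partial$ being assembled in parallel actually squares to zero. Both statements are manifestations of the self-propagation of the Maurer-Cartan equation: applying $d^{\otimes}$ to the partial master equation already in force in tensor degrees $\leq n$, using the Leibniz rule together with the fact that $(d^{\otimes})^2$ breaks into $d^2=0$ on the $\mathcal A(X)$ factor and $\partial^2$ on the tensor factor, produces exactly the cocycle identity for $\Omega_{n+1}$ and the recursive pieces of $\partial^2=0$. The cleanest formalization is to recognize the whole procedure as the coalgebra perturbation lemma applied to the tensor extension of \eqref{cont0}: starting from the trivial twisting cochain and the zero coderivation on $\mathrm T^{\mathrm c}[s\HH^*]$, the lemma simultaneously perturbs the differential into (the dual of) the desired $\partial$ and produces $\tau$ as the perturbed twisting cochain, so that \eqref{master0} holds by construction and no sign bookkeeping beyond that already built into the side conditions \eqref{side} is required.
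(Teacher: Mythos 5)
Your proposal is correct and follows essentially the same route as the paper: the recursive splitting of the quadratic obstruction by the contraction (the $h$-part feeding $\tau^{j}$, the $\pi$-part feeding the new component of $\partial$) reproduces the formulas \eqref{proc011} and \eqref{proc333}, and the paper likewise deduces Theorem \ref{chen2} by dualizing the twisting cochain of Theorem \ref{alg} into an element of $\widehat{\mathrm T}_{\mathcal A(X)}[s^{-1}\HH]$ satisfying the master equation. Your closing appeal to the coalgebra perturbation lemma via the tensor trick is exactly how the paper discharges the cocycle verification in Section \ref{algebra}.
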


We will show below how this theorem is a consequence of a somewhat more 
general result. For intelligibility, at the present stage, we note
the following:
Suppose that $X$ is simply connected.
Then the algebra
${\mathrm T}[s^{-1}\HH]$
is already complete, that is, coincides with
$\widehat {\mathrm T}[s^{-1}\HH]$,
and the algebra
$\widehat {\mathrm T}_{\mathcal A(X)}[s^{-1}\HH]$
comes down to the ordinary tensor product 
$\mathcal A(X) {\otimes} {\mathrm T}[s^{-1}\HH]$.
The differential $\partial$ can be written as an infinite
series
\[
\partial = \partial^1 + \partial^2 + \ldots
\]
where $\partial^1$ is the ordinary cobar construction differential
relative to the coalgebra structure on the real homology 
$\HHHH_*(X)$
of $X$, the
resulting differential graded algebra
$({\mathrm T}[s^{-1}\HH], \partial)$
is then a kind of perturbed reduced cobar construction,
and this perturbed reduced cobar construction is a model
for the real chain algebra of the based loop space of $X$;
we therefore write this differential graded algebra
as $\rcob_{\partial}[\HHHH_*(X)]$.
In particular, when the higher terms $\partial^j$ for $j \geq 2$
are zero, the manifold $X$ is formal over the reals.

Chen already established a version of the
\lq\lq minimality theorem\rq\rq,
though not in the language of $A_{\infty}$-structures.
Indeed, Theorem \ref{chen2},
together with Theorem 3.1.1 in \cite{chen},
include the statement 
that, for simply connected $X$, the real homology $\HHHH_*(X)$
of the manifold
$X$ acquires an $A_{\infty}$-coalgebra structure such that
the chains on $X$ and $\HHHH_*(X)$, endowed with the $A_{\infty}$-coalgebra
structure, are equivalent. 
Dualized, this is precisely the statement of
the
\lq\lq minimality theorem\rq\rq\ over the reals.

\begin{Remark}\label{chenmini} 
Suitably interpreted, Chen's construction makes perfect sense over the rationals.
Let $X$ be a simply connected space
and take $\mathcal A(X)$ to be the graded commutative algebra
of rational forms on $X$.
We continue to denote the resulting
differential graded algebra, now over the rationals, $\mathbb Q$,
by $\rcob_{\partial}[\HHHH_*(X)]$.
The rational homology $\HHHH_*(X)$ acquires a graded cocommutative
coalgebra structure whence the shuffle diagonal map
turns $\rcob_{\partial}[\HHHH_*(X)]$ into a
graded cocommutative Hopf algebra. Thus, by the Milnor-Moore theorem,
$\rcob_{\partial}[\HHHH_*(X)]$ is the universal enveloping algebra
$\mathrm U[L]$ of a differential graded Lie algebra $L$.
This differential graded Lie algebra $L$ is in fact the familiar {\em minimal\/}
Lie algebra model for the based loop space on $X$
nowadays widely used in rational homotopy theory.
This has been worked out in
{\rm \cite{hainone}, \cite{hainthr}, \cite{tanreboo}};
indeed, in the latter reference,
the differential graded Lie algebra $L$, together with
the formal power series connnection,
is referred to as the {\em Chen model\/} for (the rational homotopy type of) the
space $X$. It is in this sense
that the usage of the term \lq\lq minimal\rq\rq\ 
in the present paper is compatible with its usage in rational homotopy theory.

The notion of $C_{\infty}$-coalgebra is lurking behind these
observations.
We shall elucidate this kind of structure in Remark {\rm \ref{cinfco}} below.

\end{Remark}

\section{Perturbations for coalgebras}

Abstracting from the perturbation argument for Chen's theorem
quoted above, V. Gugenheim developed a general perturbation
theory for differential graded coalgebras which includes
and explains Chen's theorem \cite{gugenhth}.
Gugenheim made it entirely clear that his perturbation argument
is formally exactly the same as that of Chen, but placed in a much 
more general context.
We will now explain a version of Gugenheim's
result, somewhat more general than the original one in
\cite{gugenhth}.
The proofs of all the claims in this section 
will be given in
Section \ref{algebra} below.

Let $C$ be a coaugmented differential graded coalgebra and let
\begin{equation}
   (\HHH
     \begin{CD}
      \null @>{\nabla}>> \null\\[-3.2ex]
      \null @<<{\pi}< \null
     \end{CD}
    C, h)
\label{contco}
  \end{equation}
be a contraction of chain complexes.
The situation considered by Gugenheim in \cite{gugenhth} is the special case
where $M$ has zero differential, so that $M$ then amounts to the 
homology of $C$.
In the general case,
the counit $\varepsilon \colon C \to R$ and coaugmentation
$\eta \colon R \to C$ induce a
\lq\lq counit\rq\rq\  $\varepsilon \colon \HHH \to R$ 
and \lq\lq coaugmentation\rq\rq\ 
$\eta \colon R \to \HHH$
for $\HHH$ in such a way that \eqref{contco} is a contraction of
augmented and coaugmented chain complexes.
Thus $\HHH$ admits a direct sum decomposition
$\HHH = R \oplus \JJJ\HHH$;
indeed we can view $\HHH$ as a differential graded coalgebra
with zero diagonal on $\JJJ\HHH$,
and $\JJJ\HHH$ can then be interpreted as 
the coaugmentation coideal of $\HHH$.
The differential graded {\em tensor\/} algebra
$\mathrm T[s^{-1}\JJJ\HHH]$
is then the corresponding reduced {\em cobar\/} construction for $\HHH$;
let $\tau_{\HHH}\colon \HHH \to \mathrm T[s^{-1}\JJJ\HHH]$
be the universal cobar construction
twisting cochain, that is, the desuspension mapping, followed by the
canonical injection.

Let
\begin{equation}
\tau^1 = \tau_{\HHH}\pi\colon C \to s^{-1}\JJJ\HHH\subseteq
\mathrm T[s^{-1}\JJJ\HHH]
\label{proc001}
\end{equation}
and, for $\jj \geq 2$, 
let
\[
\tau^j \colon  C \longrightarrow (s^{-1}\JJJ\HHH)^{\otimes j}
\subseteq \mathrm T[s^{-1}\JJJ\HHH]
\]
be the degree $-1$ morphism defined recursively by
\begin{equation}
\tau^{\jj} = (\tau^1\cup\tau^{{\jj}-1} +  \dots +
\tau^{{\jj}-1}\cup\tau^1)h\colon C \to (s^{-1}\JJJ\HHH)^{\otimes j} .
\label{proc01}
\end{equation}
Thereafter, for $\jj \geq 1$,
define the degree $-1$ derivation
$\mathcal D^j$ on $\mathrm T[s^{-1}\JJJ\HHH]$ 
by
\begin{equation}
 \mathcal D^{\jj}\tau_{\HHH} = (\tau^1 \cup\tau^{\jj} +
\dots + \tau^{\jj} \cup\tau^1)\nabla \colon \HHH \to
 (s^{-1}\JJJ\HHH)^{\otimes (j+1)}
\subseteq \mathrm T[s^{-1}\JJJ\HHH]. \label{proc31}
\end{equation}

Let $\widehat {\mathrm T}[s^{-1}\JJJ\HHH]$
denote the graded completion 
of the graded tensor algebra ${\mathrm T}[s^{-1}\JJJ\HHH]$,
the term completion being taken
relative to the augmentation filtration.

\begin{Theorem}\label{c}
The infinite series
\begin{equation}
\mathcal D = \mathcal D^1 + \mathcal D^2 + \dots \colon
\widehat {\mathrm T}[s^{-1}\JJJ\HHH] \to 
\widehat {\mathrm T} [s^{-1}\JJJ\HHH] \label{proc211}
\end{equation}
is an algebra perturbation of the differential $d$ on
$\widehat {\mathrm T}[s^{-1}\JJJ\HHH]$
induced from the differential on $\HHH$, and the infinite series
\begin{equation}
\tau = \tau^1 + \tau^2 + \dots \colon 
C \to \widehat {\mathrm T}[s^{-1}\JJJ\HHH]\label{proc111}
\end{equation}
is a twisting cochain
\[
C \longrightarrow (\widehat {\mathrm T}[s^{-1}\JJJ\HHH], d + \mathcal D) .
\]
\end{Theorem}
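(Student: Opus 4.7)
The plan is to establish the two assertions simultaneously by induction on the tensor-length filtration of $\mathrm T[s^{-1}\JJJ\HHH]$. Convergence of the series \eqref{proc211} and \eqref{proc111} in the augmentation-completion $\widehat{\mathrm T}[s^{-1}\JJJ\HHH]$ is automatic, since $\tau^j$ lands in tensor degree $j$ and $\mathcal D^j$ raises tensor degree by $j$; only finitely many terms contribute modulo any fixed filtration layer, so it suffices to verify both identities layer by layer.

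For the base case I would check that $\tau^1 = \tau_{\HHH}\pi$ is a chain map modulo filtration $\geq 2$, which is immediate from the fact that $\pi$ is a chain map and $\tau_{\HHH}$ is the universal cobar twisting cochain for the zero-differential coalgebra $\HHH$. For the inductive step, write the recursion \eqref{proc01} schematically as $\tau^j = \Phi_j h$ with $\Phi_j = \sum_{i+k=j,\,i,k\geq 1} \tau^i \cup \tau^k$. I would then apply $D$ and invoke the contraction identity $Dh = \mathrm{Id} - \nabla\pi$, together with $|\Phi_j| = -2$, to obtain
\[
D\tau^j = (D\Phi_j)\,h + \Phi_j - \Phi_j\nabla\pi,
\]
and match each summand against the corresponding layer of the twisting cochain equation $D\tau + \tau\cup\tau = 0$. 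The first summand $(D\Phi_j)h$ is to be handled by the inductive hypothesis: via the Leibniz rule for $D$ on the cup bracket, the lower-order twisting cochain equations telescope and cancel. The middle term $\Phi_j$ is exactly the new contribution to $\tau\cup\tau$ at filtration $j$. The last term $-\Phi_j\nabla\pi$ is precisely what the perturbation $\mathcal D^{j-1}$ is designed to absorb, since by \eqref{proc31} one has $\mathcal D^{j-1}\tau_{\HHH} = \Phi_j\nabla$; composing with $\pi$ and using $\pi\nabla = \mathrm{Id}$ reassembles this into a $\mathcal D$-contribution to $D\tau$. Throughout, the side conditions $\pi h = 0$, $h\nabla = 0$, $hh = 0$ are what keep the cancellations clean.

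Having established the twisting cochain equation, I would deduce $(d+\mathcal D)^2 = 0$ as a consequence. Since $d+\mathcal D$ is a derivation of the tensor algebra, its square is a derivation as well, hence determined by its action on the generators $s^{-1}\JJJ\HHH$. Via the defining relation \eqref{proc31}, these values are read off from the twisting cochain equation for $\tau$ after precomposition with $\nabla$, using $\pi\nabla = \mathrm{Id}$ to extract the generator-level statement from the coalgebra-level one. The vanishing of $\tau$ on the coaugmentation is immediate, since each $\tau^j$ for $j\geq 2$ factors through $h$ while $\tau^1 = \tau_{\HHH}\pi$ annihilates $\eta(R)$ because $\pi$ does in an augmented contraction.

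The main obstacle I anticipate is the sign bookkeeping under the Eilenberg--Koszul convention and the careful matching of the $(j-1)$-indexing of $\mathcal D^{j-1}$ with the $j$-indexing of $\tau^j$, so that the feedback between the two recursions closes at every filtration level. Conceptually the theorem is an instance of HPT applied to the cobar construction, and the cleanest organization should view the pair $(\tau, \mathcal D)$ as a single perturbation of the chain-level datum $(\tau_{\HHH}\pi, d)$ into a fully coherent cobar picture, with the recursions \eqref{proc01} and \eqref{proc31} encoding the iterative solution of the master equation \eqref{master}.
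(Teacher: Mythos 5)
Your outline is essentially correct, but it takes a genuinely different route from the paper. You verify the twisting cochain identity directly by induction on tensor length, feeding $Dh=\mathrm{Id}-\nabla\pi$ and the side conditions into the recursions \eqref{proc01} and \eqref{proc31}, and then extract $(d+\mathcal D)^2=0$ on generators via the observation that $\tau\nabla=\tau_{\HHH}$ (a consequence of $h\nabla=0$ and $\pi\nabla=\mathrm{Id}$). This is precisely the ``by hand'' argument of Chen \cite{chenfou} and Gugenheim \cite{gugenhth}, which the paper deliberately sets aside: in Section \ref{algebra} the theorem is instead derived by applying the algebra perturbation lemma, Theorem \ref{2.2a} (Theorem 2.2$^*$ of \cite{huebkade}), to the cobar perturbation $\partial$ on $\mathrm T[s^{-1}\JJJ C]$, transported through the tensor-trick contraction \eqref{2.2.0} (resp.\ its completion \eqref{2.2.00}), after which the output of the formulas \eqref{1.1.1}--\eqref{1.1.4} is identified with \eqref{proc111} and \eqref{proc211}. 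Your approach is more elementary and self-contained, but its entire substance lies in the step you only gesture at: the telescoping of the lower-order twisting cochain identities inside $(D\Phi_j)h$, which requires the associativity of the cup product and the derivation property of the $\mathcal D^l$ to see that $D\Phi_j+\sum_l\mathcal D^l\Phi_{j-l}$ vanishes after composition with $h$; that computation must actually be carried out for the induction to close. The paper's route buys more for comparable effort: it yields not merely the twisting cochain but a full natural contraction of filtered algebras \eqref{2.2.12} (hence Complement 2), and it generalizes verbatim to $A_\infty$-coalgebra input (Theorem \ref{2.2cinf}). Two small inaccuracies in your write-up, neither fatal: $\HHH$ carries the trivial coalgebra structure but not in general the zero differential (that is only Gugenheim's special case), and identifying $-\Phi_j\nabla\pi$ with $-\mathcal D^{j-1}\tau^1$ needs only composition of \eqref{proc31} with $\pi$, not the relation $\pi\nabla=\mathrm{Id}$.
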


For the special case where $\HHH$ has zero differential
so that $\HHH$ then coincides with the homology $\HHHH(C)$ of $C$, this 
theorem is
essentially Proposition 2.1 in \cite{gugenhth}.
We will henceforth write
\begin{equation}
\widehat {\rcob}_{\mathcal D}\HHH=
(\widehat {\mathrm T}[s^{-1}\JJJ\HHH], d + \mathcal D) .
\label{shcc}
\end{equation}

\smallskip
\noindent{\textbf {Complement 1.}}
{\sl Suppose that, in addition, $\HHH$ is a coaugmented differential graded coalgebra
and that $\nabla$ is a morphism of differential graded coalgebras.
Then $\mathcal D^j$ is zero for $j \geq 2$ and
$\mathcal D^1$ is the algebra perturbation
determined by the coalgebra structure of $\HHH$.
}
\smallskip

Under suitable circumstances,
the graded tensor algebra
$\mathrm T[s^{-1}\JJJ\HHH]$ is already complete.
This happens to be the case when $\HHH$ is {\em simply connected\/}
in the sense that it is concentrated in non-negative
degrees and that $\mathrm J\HHH_1$ is zero,
or when $\mathrm J\HHH$ is concentrated in non-positive degrees.
In this case,
the series
\begin{align}
\tau &= \tau^1 + \tau^2 + \dots \colon 
C \to\mathrm T[s^{-1}\JJJ\HHH],\label{proc11}
\\
\mathcal D &= \mathcal D^1 + \mathcal D^2 + \dots \colon
\mathrm T[s^{-1}\JJJ\HHH] \to \mathrm T[s^{-1}\JJJ\HHH] \label{proc21}
\end{align}
converge naively in the sense that,
applied to a specific element,
only finitely many terms are non-zero.
Furthermore,
$\mathcal D$ is then an algebra perturbation 
of the differential on $\mathrm T[s^{-1}\JJJ\HHH]$,
and
\[
\tau = \tau^1 + \tau^2 + \dots 
\colon C \longrightarrow \rcob_{\mathcal D} \HHH 
\]
is a {\em twisting cochain \/} where we use the notation
\begin{equation}
\rcob_{\mathcal D}\HHH=
({\mathrm T}[s^{-1}\JJJ\HHH], d + \mathcal D) .
\label{shc}
\end{equation}
At the risk of being, perhaps, repetitive, we point out explicitly that
the piece of structure
$\mathcal D$ in $\rcob_{\mathcal D}\HHH$ is precisely an
$A_{\infty}$-coalgebra structure on $\HHH$.

\smallskip
\noindent{\textbf {Complement 2 to Theorem \ref{c}.}}
{\sl
When $C$ is simply connected or when $C$ is concentrated
in non-positive degrees,
the adjoint 
\[
\overline \tau \colon \rcob C \longrightarrow 
\rcob_{\mathcal D}\HHH
\]
of the twisting cochain $\tau$, 
necessarily a morphism of differential graded algebras, 
is a chain equivalence.
If, furthermore, $\HHH$ is a coaugmented differential graded coalgebra
and $\nabla$ is a morphism of differential graded coalgebras,
$\rcob_{\mathcal D}\HHH$
is the ordinary reduced cobar construction on $\HHH$.}

\smallskip
Indeed, in the situation of the \lq\lq Furthermore\rq\rq\ statement
of Complement 2, 
 the 
vanishing of the higher terms  $\mathcal D^j$ for $j \geq 2$
is a consequence of the annihilation property $ h \nabla = 0$ and
the construction of 
the twisting cochain $\tau$ comes essentially
down to \cite{gugenmun}\ (4.1)$^*$.
A result somewhat weaker than the above Complement 2 is
Theorem 3.2 in \cite{gugenhth} which says that
$\overline \tau$ is a homology isomorphism.

Complement 2 to Theorem \ref{c} includes the statement that,
in the simply connected case,
$\HHH$, endowed with the $A_{\infty}$-coalgebra structure
$\mathcal D$, and $C$, endowed with the 
$A_{\infty}$-coalgebra structure associated with the 
differential graded coalgebra structure,
are, via $\tau$, equivalent as $A_{\infty}$-coalgebras.

\begin{Remark} [Lemma 2.2.1 in \cite{gugenhth}] 
Suppose that the differential of $\HHH$ is zero.
Then $\HHH$ amounts to the homology $\HHHH(C)$ of $C$
and acquires the structure of a coaugmented graded coalgebra.
In this case, even though neither the morphism $\nabla$ nor the morphism
$\pi$ in the contraction \eqref{contco} are supposed to be compatible with
the coalgebra structures,
the operator $\mathcal D^1$ is the ordinary cobar construction
differential on the graded tensor algebra ${\mathrm T}[s^{-1}\JJJ\HHHH(C)]$
determined by the diagonal map $\Delta_{\HHHH(C)}$ of $\HHHH(C)$. 
Indeed, write the diagonal map of $C$ as $\Delta$ as usual. 
By construction, the 
diagonal map $\Delta_{\HHH}$ of $\HHH$ coincides with the composite
\begin{equation*}
\begin{CD}
\HHH @>\nabla>> C @>{\Delta}>> C \otimes C @>{\pi \otimes \pi}>>
\HHH \otimes \HHH .
\end{CD}
\end{equation*}
Consequently
\begin{align*}
 \mathcal D^1\tau_{\HHH} &= (\tau^1 \cup\tau^1)\nabla
\\
 &= ( (\tau_{\HHH}\pi) \cup  (\tau_{\HHH}\pi))\nabla
\\
 &=  (\tau_{\HHH} \otimes  \tau_{\HHH})(\pi\otimes \pi)\Delta\nabla
\\
&=  (\tau_{\HHH} \otimes  \tau_{\HHH})\Delta_{\HHH}.
\end{align*}
However the identity
\[
\mathcal D^1\tau_{\HHH} =
(\tau_{\HHH} \otimes  \tau_{\HHH})\Delta_{\HHH}
\]
says that $\mathcal D^1$ is the ordinary cobar construction
differential.
Thus, in this case,
$\rcob_{\mathcal D}\HHHH(C)$ is a perturbation of the
ordinary reduced cobar construction $\rcob\HHHH(C)$
over $\HHHH(C)$ and thence endows $\HHHH(C)$ with an
$A_{\infty}$-coalgebra structure equivalent to $C$.

\end{Remark}

\begin{Remark}\label{cinfco}
The concept dual to that of a $C_{\infty}$-algebra is that of a
$C_{\infty}$-coalgebra: Let $C$ be a coaugmented differential graded
$A_{\infty}$-coalgebra, with standard construction
$\rcob_{\partial}(C)$. Then this $A_{\infty}$-coalgebra structure on $C$
is a $C_{\infty}$-coalgebra structure provided the shuffle diagonal
turns $\rcob_{\partial}(C)$ into a differential graded Hopf algebra,
necessarily graded cocommutative. 
Exactly the same reasoning as that in Remark {\rm \ref{chenmini}} reveals that
the standard construction $\rcob_{\partial}(C)$ 
of a coaugmented $C_{\infty}$-coalgebra $C$ 
is the universal enveloping algebra
$\mathrm U[L]$ of a differential graded Lie algebra $L$ which, in turn, is a perturbation
of the free differential graded Lie algebra generated by $s^{-1}(\mathrm J C)$.
Thus the standard construction of a (coaugmented) $C_{\infty}$-coalgebra
comes down to a {\em perturbation of the
free differential Lie algebra generated by\/} $s^{-1}(\mathrm J C)$.

In the special case where, as a differential graded coalgebra, $C$ is an ordinary
(coaugmented) cocommutative differential graded coalgebra,
the shuffle diagonal plainly turns  the ordinary cobar construction $\rcob(C)$
into a differential graded Hopf algebra;
this situation
has been explored 
by J. Moore in {\rm \cite{mooresix}} and  {\rm \cite{moorefiv}}. 

\end{Remark}

\section{Perturbations for algebras}

We now spell out the situation dual to that in the previous
section. Again the proofs of all the claims in this section 
will be given in
Section \ref{algebra} below.

Thus,
let $A$ be an augmented differential graded algebra and let
\begin{equation}
   (\HHH
     \begin{CD}
      \null @>{\nabla}>> \null\\[-3.2ex]
      \null @<<{\pi}< \null
     \end{CD}
    A, h)
\label{contalg}
  \end{equation}
be a contraction of {\em chain complexes\/}.
The unit $\eta \colon R \to A$ and augmentation
$\varepsilon \colon A \to R$ induce a
\lq\lq unit\rq\rq\  $\eta \colon R \to \HHH$ 
and \lq\lq augmentation\rq\rq\ 
$\varepsilon \colon \HHH \to R$
for $\HHH$ in such a way that \eqref{contalg} is a contraction of
augmented and coaugmented chain complexes.
Thus $\HHH$ admits a direct sum decomposition
$\HHH = R \oplus \III\HHH$;
indeed we can view $\HHH$ as a differential graded algebra
with zero multiplication on $\III\HHH$,
and $\III\HHH$ can then be interpreted as 
the augmentation ideal of $\HHH$.
The differential graded {\em tensor\/} coalgebra
$\mathrm T^{\mathrm c}[s\III\HHH]$
is then the corresponding reduced {\em bar\/} construction for $\HHH$;
let $\tau_{\HHH}\colon \mathrm T^{\mathrm c}[s\III\HHH]\to \HHH$
be the universal bar construction
twisting cochain, that is, the 
canonical projection to $s\III\HHH$,
followed by the desuspension mapping.

Let
\begin{equation}
\tau^1 = \nabla \tau_{\HHH}\colon  \mathrm T^{\mathrm c}[s\III\HHH]
\to  s\III\HHH\to A
\label{proc0001}
\end{equation}
and, for $\jj \geq 2$, 
let
\[
\tau^j \colon  
\mathrm T^{\mathrm c}[s\III\HHH]
\to  (s\III\HHH)^{\otimes j}\to A
\]
be the degree $-1$ morphism defined recursively by
\begin{equation}
\tau^{\jj} = h(\tau^1\cup\tau^{{\jj}-1} +  \dots +
\tau^{{\jj}-1}\cup\tau^1)\colon (s\III\HHH)^{\otimes j}\to A.
\label{proc011}
\end{equation}
Thereafter, for $\jj \geq 1$,
define the degree $-1$ coderivation
$\mathcal D^j$ on 
$\mathrm T^{\mathrm c}[s\III\HHH]$
by
\begin{equation}
\tau_{\HHH} \mathcal D^{\jj} =  \pi (\tau^1\cup \tau^{\jj} +
\dots + \tau^{\jj}\cup \tau^1) 
\colon \mathrm T^{\mathrm c}[s \mathrm I\HHH] _{\jj+ 1}^{\mathrm c}
= (s\III\HHH)^{\otimes (j+1)}
 \to
\HHH. \label{proc333}
\end{equation}
In particular, for $\jj \geq 1$, the coderivation $\mathcal
D^{\jj}$ is zero on $\mathrm F_j \mathrm T^{\mathrm c}[s \mathrm I\HHH] $ 
and lowers
coaugmentation filtration by $\jj$.

\begin{Theorem}\label{alg}
The infinite series
\begin{equation}
\mathcal D = \mathcal D^1 + \mathcal D^2 + \dots \colon
\mathrm T^{\mathrm c}[s\III\HHH] \to \mathrm T^{\mathrm c}[s\III\HHH]
 \label{proc2111}
\end{equation}
is a coalgebra perturbation of the differential $d$ on
$\mathrm T^{\mathrm c}[s\III\HHH]$
induced from the differential on $\HHH$, and the infinite series
\begin{equation}
\tau = \tau^1 + \tau^2 + \dots \colon 
\mathrm T^{\mathrm c}[s\III\HHH] \to A
\label{proc1111}
\end{equation}
is a twisting cochain
\[
(\mathrm T^{\mathrm c}[s\III\HHH], d + \mathcal D)
\longrightarrow A.
\]
Furthermore, the adjoint 
\[
\overline \tau \colon \rbar_{\mathcal D}\HHH
\longrightarrow 
\rbar A
\]
of the twisting cochain $\tau$, necessarily 
a morphism of differential graded coalgebras, is a chain equivalence.
\end{Theorem}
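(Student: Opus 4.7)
The argument should parallel the dual one for Theorem \ref{c}, with the roles of $\nabla$ and $\pi$ interchanged. The basic tools are the contraction identity $\id_A = \nabla\pi + dh + hd$, the annihilation properties $\pi h = 0$, $h\nabla = 0$, $hh = 0$, and the fact that a coderivation of $\mathrm T^{\mathrm c}[s\III\HHH]$ is determined by its composite with the universal twisting cochain $\tau_{\HHH}$.

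The first step is to prove, by induction on $j \geq 1$, that the twisting cochain equation $D\tau - \tau\cup\tau = 0$ holds modulo coaugmentation filtration $\geq j+2$ in $\mathrm T^{\mathrm c}[s\III\HHH]$, where $D$ is the $\Hom$-complex differential formed with respect to $d + \mathcal D^1 + \cdots + \mathcal D^j$ on the source and $d$ on the target. For the inductive step I would restrict $D\tau - \tau\cup\tau$ to the filtration-$(j+1)$ piece and decompose the output in $A$ via $\id_A = \nabla\pi + dh + hd$: the $\nabla\pi$-part is cancelled by the defining equation \eqref{proc333} for $\mathcal D^j$; the $dh$-part is cancelled by the recursive definition \eqref{proc011} of $\tau^{j+1}$; and the $hd$-part is forced to vanish by the inductive hypothesis together with $hh = 0$. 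Thus $\mathcal D^j$ and $\tau^{j+1}$ are not chosen independently but precisely so as to absorb, level by level, the two components of the error that survive after applying $h$ and $\pi$. Once the twisting cochain equation is verified, the coalgebra perturbation relation $(d + \mathcal D)^2 = 0$ follows formally: compose $(d+\mathcal D)^2$ with $\tau_{\HHH}$, expand using the twisting cochain equation and $D^2 = 0$ in $\Hom(\mathrm T^{\mathrm c}[s\III\HHH], A)$, and invoke the uniqueness of coderivations extending a given map to $s\III\HHH$. The fact that $\overline\tau$ is a morphism of differential graded coalgebras is then the standard bar/twisting-cochain adjunction.

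For the chain equivalence statement I would invoke the tensor trick: the contraction \eqref{contalg} extends degreewise to a contraction of the underlying graded tensor coalgebras $\mathrm T^{\mathrm c}[s\III\HHH]$ and $\mathrm T^{\mathrm c}[s\III A]$, each equipped only with the internal differential induced from $\HHH$ and $A$. The bar differential on $\rbar A$ is then a coalgebra perturbation of this internal differential, and the (co)algebra perturbation lemma transfers it to a coalgebra perturbation on the $\HHH$-side together with new structure maps; a direct comparison with the recursive formulas \eqref{proc011} and \eqref{proc333} identifies the transferred perturbation with $\mathcal D$ and the transferred chain map with $\overline\tau$. Consequently $\overline\tau$ sits inside a contraction of chain complexes and is in particular a chain equivalence.

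The main obstacle is the combinatorial bookkeeping of Eilenberg--Koszul signs and filtration indices in the inductive step. One has to verify that the three summands of $\id_A$ applied to the filtration-$(j+1)$ piece of $D\tau - \tau\cup\tau$ match up, with compatible signs, with the definitions of $\mathcal D^j$ and $\tau^{j+1}$ and with the inductive hypothesis, and that the cup-bracket sign convention is consistent with the Koszul sign in $D\tau$. This is formal but delicate; once the signs line up, the two assertions of the theorem and the identification of $\overline\tau$ as a chain equivalence all follow from the structural identities of the contraction.
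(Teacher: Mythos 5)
Your proposal is correct, but it is really two proofs stitched together, and only the second one coincides with the paper's. The paper defers the proof of Theorem \ref{alg} to Section \ref{algebra} and proves it there in one stroke: the contraction \eqref{contalg} is fed into the tensor trick to produce the contraction \eqref{2.2.0c} of tensor coalgebras, the bar-construction differential of $\rbar A$ is regarded as a coalgebra perturbation $\partial$ of the internal differential on $\mathrm T^{\mathrm c}[s\III A]$, and the coalgebra perturbation lemma (Theorem \ref{2.2c}, which needs no connectivity hypothesis) delivers simultaneously the perturbation $\mathcal D$, the morphism $\mathrm T^{\mathrm c}_{\partial}\pi$ whose adjoint-type description yields $\tau$, and a full contraction exhibiting $\overline\tau$ as a chain equivalence; one then checks that the series \eqref{1.1.1}--\eqref{1.1.4} reproduce the recursive formulas \eqref{proc011} and \eqref{proc333}. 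Your second paragraph is precisely this argument, and it already proves all three assertions of the theorem, so your first paragraph --- the level-by-level induction on the coaugmentation filtration using $\mathrm{Id}_A=\nabla\pi+dh+hd$ --- is logically redundant. That induction is nevertheless a legitimate independent route to the first two assertions: it is the ``by hand'' construction of Chen, Gugenheim and Gugenheim--Stasheff that the paper explicitly contrasts with its own perturbation-lemma approach, and what it buys is an elementary verification not requiring the tensor trick, at the price of the sign and filtration bookkeeping you acknowledge and of not yielding the chain equivalence by itself. One wording in your inductive step should be sharpened: the $hd$-part of the filtration-$(n{+}1)$ error does not simply ``vanish''; rather, after substituting the inductive hypothesis into $d(\tau\cup\tau)$ and cancelling the resulting triple cup products by associativity, $hd\Phi^{n+1}$ reproduces exactly the remaining terms $\tau^{n+1}d+\sum_{i\ge 2}\tau^{i}\mathcal D^{\,n+1-i}$ of the twisting-cochain equation, the side conditions $h\nabla=0$ and $hh=0$ guaranteeing that nothing else survives. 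Your derivation of $(d+\mathcal D)^{2}=0$ from the twisting-cochain equation via $\tau_{\HHH}=\pi\tau$ and the coderivation property is sound.
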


To our knowledge, such a result for a general chain complex $\HHH$ 
appears in the literature for the first time
in \cite{gugensta}---the special case where the ground ring is a field
and where $\HHH$ has zero homology is due to Chen, as noted earlier.
The sums \eqref{proc2111} and \eqref{proc1111} are in general infinite.
However, applied to a specific element which, since
$\mathrm T^{\mathrm c}[s\III\HHH]$ 
is cocomplete, necessarily lies in some finite
filtration degree subspace, since the operators $\mathcal D^{\jj}$
($\jj \geq 1$) lower coaugmentation filtration by $\jj$, only
finitely many terms will be non-zero, whence the convergence is
naive.
We will henceforth write
\begin{equation}
\rbar_{\mathcal D}\HHH=
(\mathrm T^{\mathrm c}[s\III\HHH], d + \mathcal D) .
\label{sha}
\end{equation}
The piece of structure
$\mathcal D$ in
$\rbar_{\mathcal D}\HHH$ is precisely an
$A_{\infty}$-algebra structure on $\HHH$
and Theorem \ref{alg} includes the statement that
$\HHH$, endowed with the $A_{\infty}$-algebra structure
$\mathcal D$, and $A$, endowed with the 
$A_{\infty}$-algebra structure associated with the 
differential graded algebra structure,
are, via $\tau$, equivalent as $A_{\infty}$-algebras.
This recovers, in particular,
Kadeishvili's result, Theorem \ref{kadea}.

\smallskip
\noindent{\textbf {Complement to Theorem \ref{alg}.}}
{\sl Suppose that, in addition, $\HHH$ is an augmented differential graded algebra
and that $\pi$ is a morphism of differential graded algebras.
Then $\mathcal D^j$ is zero for $j \geq 2$, the operator
$\mathcal D^1$ is the coalgebra perturbation
determined by the algebra structure of $\HHH$,
and $\rbar_{\mathcal D}\HHH$ is the ordinary reduced bar construction for
$\HHH$.}

\smallskip
Indeed, in the situation of the Complement, the 
vanishing of the higher terms  $\mathcal D^j$ for $j \geq 2$
is a consequence of the annihilation property $\pi h = 0$ and the
construction of the 
twisting cochain comes essentially
down to \cite{gugenmun}\ (4.1)$_*$.

In the special case where $A$ has zero differential
and the original contraction \eqref{contalg}
is the trivial contraction of the kind
\begin{equation}
   (A
     \begin{CD}
      \null @>{\mathrm{Id}}>> \null\\[-3.2ex]
      \null @<<{\mathrm{Id}}< \null
     \end{CD}
    A, 0) ,
\label{trivial}
  \end{equation}
$\HHH$ and $A$ coincide, the perturbation
$\ppartial$ coincides with the perturbation $\partial$ determined
by the algebra structure on $A$, and
$\mathrm T_{\ppartial}^{\mathrm c}[s\HHH]$
coincides with the ordinary reduced
bar construction for $A$; the  twisting cochain $\tau$ then comes
down to the bar construction twisting cochain
and in fact coincides with $\tau^1$.
In this case, the higher terms $\tau^{\jj}$ and
$\ppartial^{\jj}$ ($\jj \geq 2$) are obviously zero, and the
operator $\ppartial^1$ manifestly coincides with the 
bar construction operator.

Likewise, suppose that the differential of $\HHH$ is zero.
Then $\HHH$ amounts to the homology $\HHHH(A)$ of $A$
and acquires the structure of an augmented graded algebra.
In this case, even though neither the morphism $\nabla$ nor the morphism
$\pi$ in the contraction \eqref{contalg} are supposed to be compatible with
the algebra structures,
the operator $\mathcal D^1$ is the ordinary bar construction
differential on the graded tensor coalgebra 
$\mathrm T^{\mathrm c}[s\III \HHHH(A) ]$ 
determined by the multiplication map of $\HHHH(A)$,
and $\rbar_{\mathcal D}\HHHH(A)$ is a perturbation of the ordinary reduced
bar construction $\rbar\HHHH(A)$.

\begin{Remark}\label{kami1}{\rm [Relationship with Kadeishvili's minimality 
theorem for algebras]}
Comparison of the proof of Theorem {\rm \ref{kadea}}
with that of Theorem {\rm \ref{alg}} reveals the following:
The terms $\tau^j$ in
the proof of Theorem {\rm \ref{alg}} exhibit precisely terms of the kind
$f_j$ in the proof of Theorem {\rm \ref{kadea}},
and the operators $\mathcal D^j$ 
in the proof of Theorem {\rm \ref{alg}} yield terms of the kind $m_j$
in the proof of Theorem {\rm \ref{kadea}}.
The coalgebra structure of 
$\mathrm T^{\mathrm c}[s\HHH]$ exploited in the proof of
 Theorem {\rm \ref{alg}}
organizes the otherwise tedious calculations in the original proof
of Theorem {\rm \ref{kadea}}, and the usage,
in
the proof of Theorem {\rm \ref{alg}},
of the chain homotopy $h$
in the contraction \eqref{contalg} removes the ambiguities
related with the choices of the bounding chains
in the proof of Theorem {\rm \ref{kadea}}
and thus leads to an algorithm, cf. {\rm \cite{vejjoh}}.

\end{Remark}

\section{A proof of Chen's theorem}

We will now briefly explain how Theorem \ref{alg} includes
Theorem \ref{chen2}; this will make it clear that
the basic perturbation argument goes back to Chen: 
Given the smooth manifold $X$, let
$A=\mathcal A(X)$ 
and pick a contraction of the kind {\rm \eqref{cont0}}.
Thus $\HHHH$ is then the de Rham cohomology algebra of $X$.
The recursive construction \eqref{proc011}
yields the twisting cochain
\[
\tau = \tau^1 + \tau^2 + \dots \colon 
\rbar_{\mathcal D}\HHHH \longrightarrow \mathcal A(X)
\]
and the formulas \eqref{proc333}
yield the coalgebra differential
\[
\mathcal D = \mathcal D^1 + \mathcal D^2 + \dots \colon
\mathrm T^{\mathrm c}[s\III\HHHH] \to \mathrm T^{\mathrm c}[s\III\HHHH].
\]
To simplify the exposition we will suppose that
the homology of $X$ is of finite type.
Then the differential graded algebra which is the real
dual of $\rbar_{\mathcal D}\HHHH$
is precisely a differential graded algebra of the kind
\[
\widehat{\rcob}_{\partial}[\HHHH_*(X)]
\]
where $\partial$ is the algebra differential dual to the coalgebra
differential $\mathcal D$.
Thus the twisting cochain $\tau$ then appears as an element of the
differential graded algebra
\begin{equation*}
\mathrm{Hom}(\rbar_{\mathcal D}\HHHH,\mathcal A(X))\cong
\widehat {\mathrm T}_{\mathcal A(X)}[s^{-1}\HH] =
\mathcal A(X) 
\widehat{\otimes} {\mathrm T}[s^{-1}\HH],
\end{equation*}
endowed with the total differential
\[
d^{\otimes} = d \widehat{\otimes} \mathrm{Id} +  
\mathrm{Id} \widehat{\otimes} \partial.
\]
The twisting cochain property says that $\tau$
satisfies the  master equation \eqref{master0}.
The formulas  \eqref{proc011}
and  \eqref{proc333} are then essentially the same as those 
used by Chen to establish the existence of the formal power series
connection and of the differential $\partial$ in the proof of his Theorem
1.3.1 in \cite{chenfou}.

\section{Homological perturbations and algebraic structure}
\label{algebra}

In the 1980's, I noticed that various standard HPT-constructions
are compatible with algebraic structure,
and I used this observation
to exploit $A_{\infty}$-structures arising in group cohomology
via HPT-constructions of suitable small free resolutions.
These  small free resolutions
enabled me to do 
explicit numerical calculations in group cohomology
which until today are still not doable by other methods.
In particular,
spectral sequences show up which do not collapse from $E_2$.
This illustrates a typical phenomenon:
Whenever a spectral sequence arises from a certain mathematical
structure, a certain strong homotopy structure is lurking behind and
the spectral sequence is an invariant thereof.
The higher homotopy structure is actually finer than the
spectral sequence itself.
The results have been published in the papers 
\cite{perturba}--\cite{holomorp}.

The observation that compatibility with algebraic structure
is hidden in various standard HPT-constructions led to
an alternate approach to Theorem \ref{c} and Theorem \ref{alg}
and, indeed, leads to considerable generalization,
cf. Remark \ref{general} below.
In the academic year 1987/88, lifting of the restrictions
in the USSR enabled T. Kadeishvili to accept an invitation
to the mathematics department of the University of Heidelberg.
During that period,
in collaboration, T. Kadeishvili and I 
developed HPT for general chain equivalences
and, within this research collaboration, 
we worked  out  in particular the
alternate approach.
This kind of approach was also worked out in
\cite{gugenlam}--\cite{gulstatw}.

We will now explain the outcome of this alternate
approach and how it actually leads to
proofs of these theorems and to additional insight.
To this end, 
let 
\begin{equation}
   \left(M
     \begin{CD}
      \null @>{\nabla}>> \null\\[-3.2ex]
     \null @<<{\ppi}< \null
     \end{CD}
   N , h \right)
\label{266}
  \end{equation}
be a filtered contraction. For intelligibility, we recall the following.

\begin{Lemma}[Ordinary perturbation lemma]
\label{ordinary} 
Let $\partial$ be a perturbation of the differential
on $N$, and let
\begin{align} \dell &= \sum_{n\geq 0} \ppi\partial (-h\partial)^n\nabla =
\sum_{n\geq 0} \ppi(-\partial h)^n\partial\nabla
\label{1.1.1}
\\
\nabla_{\partial}&= \sum_{n\geq 0} (-h\partial)^n\nabla
\label{1.1.2}
\\
\ppi_{\partial}&= \sum_{n\geq 0} \ppi(-\partial h)^n
\label{1.1.3}
\\
h_{\partial}&=-\sum_{n\geq 0} (-h\partial)^n h =-\sum_{n\geq 0}
h(-\partial h)^n .
\label{1.1.4}
\end{align}
When the filtrations on $M$ and $N$ are complete, these infinite
series converge, $\dell$ is a perturbation of the differential on
$M$ and, when $N_{\partial}$ and $M_{\dell}$ refer to the new
chain complexes,
\begin{equation}
   \left(M_{\dell}
     \begin{CD}
      \null @>{\nabla_{\partial}}>> \null\\[-3.2ex]
     \null @<<{\ppi_{\partial}}< \null
     \end{CD}
   N_{\partial} , h_{\partial} \right)
\label{2.66}
  \end{equation}
constitute a new filtered contraction that is natural in terms of
the given data.
\end{Lemma}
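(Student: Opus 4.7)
The plan is to reduce all four operators to compact closed-form expressions involving the formal geometric series $\Sigma = \sum_{n \geq 0}(-h\partial)^n$ and $\Sigma' = \sum_{n \geq 0}(-\partial h)^n$, and then to derive each required identity by systematic use of the annihilation properties and the perturbation equation $[d,\partial] + \partial\partial = 0$. Convergence of the series \eqref{1.1.1}--\eqref{1.1.4} is immediate, since $\partial$ strictly lowers the (complete) filtration on $N$ and hence so do the factors $(-h\partial)$ and $(-\partial h)$. Tautologically $\Sigma$ is a two-sided formal inverse of $\mathrm{Id} + h\partial$ and $\Sigma'$ of $\mathrm{Id} + \partial h$; rearranging the defining sums yields the intertwining identities $h\Sigma' = \Sigma h$ and $\partial \Sigma = \Sigma' \partial$, so that $\nabla_\partial = \Sigma \nabla$, $\ppi_\partial = \ppi \Sigma'$, $h_\partial = -\Sigma h = -h\Sigma'$, and $\dell = \ppi\partial\Sigma\nabla = \ppi\Sigma'\partial\nabla$.

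Next I would verify the side conditions $\ppi_\partial \nabla_\partial = \mathrm{Id}$, $\ppi_\partial h_\partial = 0$, $h_\partial \nabla_\partial = 0$, and $h_\partial h_\partial = 0$ by expanding each product as a double series and noting that the annihilation properties $\ppi h = 0$, $h\nabla = 0$, and $hh = 0$ of the original contraction kill every non-trivial term. For example, in $\ppi_\partial \nabla_\partial = \sum_{n,m \geq 0} \ppi(-\partial h)^n (-h\partial)^m \nabla$ the mixed terms with $n,m \geq 1$ contain a consecutive $hh$, the terms with $n \geq 1, m = 0$ end in $h\nabla$, the terms with $n = 0, m \geq 1$ begin with $\ppi h$, and only $\ppi\nabla = \mathrm{Id}$ survives; the three other side conditions are handled identically.

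The core step is then to establish the homotopy identity
\[
(d + \partial) h_\partial + h_\partial (d + \partial) = \mathrm{Id} - \nabla_\partial \ppi_\partial,
\]
which I would prove by substituting $h_\partial = -\Sigma h$, expanding, and telescoping via the original relation $dh + hd = \mathrm{Id} - \nabla\ppi$ together with the formal inverse identity $\Sigma(\mathrm{Id} + h\partial) = \mathrm{Id}$. The same manipulation yields the chain-map properties $(d + \partial)\nabla_\partial = \nabla_\partial (d_M + \dell)$ and $(d_M + \dell)\ppi_\partial = \ppi_\partial (d + \partial)$; combined with $\ppi_\partial \nabla_\partial = \mathrm{Id}$, these force $(d_M + \dell)^2 = \ppi_\partial (d + \partial)^2 \nabla_\partial = 0$, so $\dell$ is a perturbation of $d_M$, and its filtration-lowering is built into the construction. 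Naturality in the input data is automatic because every new operator is a non-commutative power series in $d$, $h$, $\nabla$, $\ppi$, $\partial$. The main obstacle is exactly the telescoping in the homotopy identity: the double series for $(d + \partial)h_\partial + h_\partial(d + \partial)$ must be reorganized so that the cross-terms between $d$ and $\partial$ cancel via $[d,\partial] + \partial\partial = 0$, and keeping signs and orders of composition consistent throughout this purely formal manipulation is the actual labor.
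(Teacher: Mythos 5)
The paper does not actually prove this lemma---its ``proof'' is the citation \emph{See \cite{rbrown} or \cite{gugenhtw}}---so your write-up supplies an argument the paper omits, and the argument you give is in substance the classical one from exactly those sources: convergence from completeness of the filtration plus the fact that $\partial$ lowers it (and that $h$, $\pi$, $\nabla$ preserve it, which you should say explicitly since it is what makes $-h\partial$ filtration-lowering), the geometric-series operators $\Sigma$, $\Sigma'$ with the intertwining relations $h\Sigma'=\Sigma h$ and $\partial\Sigma=\Sigma'\partial$, annihilation of all cross terms in the double series via \eqref{side}, the perturbed homotopy identity by telescoping against \eqref{co1}, and $(d_M+\dell)^2=\pi_\partial(d+\partial)^2\nabla_\partial=0$ from the chain-map identities together with $\pi_\partial\nabla_\partial=\mathrm{Id}$. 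All of that is correct and complete in outline. The one concrete point you must repair is a sign: with $h_\partial=-\Sigma h$ as in \eqref{1.1.4}, the identity you propose to prove, $(d+\partial)h_\partial+h_\partial(d+\partial)=\mathrm{Id}-\nabla_\partial\pi_\partial$, fails already at $\partial=0$, where it would read $-(dh+hd)=\mathrm{Id}-\nabla\pi$ and contradict \eqref{co1}. Either the leading minus in \eqref{1.1.4} is a typo (a holdover from the convention $Dh=\nabla\pi-\mathrm{Id}$ used in some of the cited literature) and you should take $h_\partial=+\Sigma h$, or you keep the minus and the identity you actually establish is $Dh_\partial=\nabla_\partial\pi_\partial-\mathrm{Id}$; as written your proposal silently mixes the two conventions, and the ``purely formal manipulation'' you defer to will not close up until you fix which one you are using.
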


\begin{proof} See \cite{rbrown} or \cite{gugenhtw}.
\end{proof}

The issue addressed in the perturbation lemma has been
described in the literature as a {\em transference\/} problem.
The ordinary perturbation lemma solves this transference problem
relative merely to a perturbation of the differential on the larger object
$N$.
We will now explore the transference problem 
relative to additional structure.

Let
$\mathrm T[N]$ and $\mathrm T[M]$
be the differential 
graded tensor algebras on 
$N$ and $M$ respectively,
 denote the multiplication map of $\mathrm T[N]$ by $m$,
let $\mathrm T\ppi$ and $ \mathrm T\nabla$ 
be the induced morphisms of differential graded algebras,
and define an operator 
$\mathrm Th \colon \mathrm T[N] \to \mathrm T[N]$
by means of
\[
(\mathrm Th \vert N^{\otimes k}) =
h \otimes (\nabla \ppi)^{\otimes (k-1)}
+
\mathrm{Id} \otimes h \otimes (\nabla \ppi)^{\otimes (k-2)}
+
\dots
+
\mathrm{Id}^{\otimes (k-1)} \otimes h,\quad k \geq 1.
\]
This is an instance of what is referred to as the {\em tensor trick\/},
developed in \cite{homotype}
and exploited in \cite{gulstatw}, \cite{homotype},  
\cite{perturba}, \cite{cohomolo}, \cite{huebkade}
and elsewhere; cf. also \S 3 of \cite{gulasta}.
We will come back to the tensor trick in Section \ref{labell}
below.

With the above preparations out of the way, the morphisms $m$,
 $\mathrm T\ppi$, $\mathrm Th$, etc. are related by the identities
\[
m (\mathrm{Id} \otimes \mathrm Th + 
\mathrm Th \otimes (\mathrm T\nabla)(\mathrm T\ppi)) = (\mathrm Th)\,m,
\]
and 
\[
D(\mathrm Th) (=d(\mathrm Th) + (\mathrm Th)d) 
= (\mathrm T\nabla)(\mathrm T\ppi) - \mathrm{Id},
\]
that is,
$\mathrm Th$ is a homotopy 
$\mathrm{Id}\simeq (\mathrm T\nabla)(\mathrm T\ppi) $
of morphisms of differential graded algebras,
whence the data
\begin{equation}
   \left(\mathrm T[M]
     \begin{CD}
      \null @>{\mathrm T\nabla}>> \null\\[-3.2ex]
     \null @<<{\mathrm T\ppi}< \null
     \end{CD}
   \mathrm T[N] ,\mathrm T h \right)
\label{2.2.0}
\end{equation}
constitute a contraction of augmented algebras.
Further, with respect to the augmentation filtrations
on $\mathrm T[N]$ and $\mathrm T[M]$,
these data constitute in fact 
a contraction of filtered algebras.
This idea  goes back to Theorem 12.1 in 
{\sc Eilenberg and Mac Lane I}~\cite{eilmactw}
where it is spelled out in the dual situation
as a {\em contraction of bar constructions\/}.
The  contraction  of coalgebras that corresponds to \eqref{2.2.0}
is also spelled out  in 
\cite{gugenlam} (3.2),
in 
\cite{gulasta} (\S 3), and in \cite{gulstatw}  (2.2).

Given a chain complex $X$ and a multiplicative perturbation
$\partial$ of the algebra differential
on $\mathrm T[X]$, we shall denote the new differential graded
algebra by $\mathrm T_{\partial}[X]$.
Maintaining terminology introduced in \cite{huebkade},
we shall refer to
a chain complex $X$ 
(without any additional structure)
as being {\em connected in the reduced sense\/}
provided it
is zero in degree zero and, furthermore, either non-negative
or non-positive;
the reader will note that 
\lq\lq connected in the reduced sense\rq\rq\ 
does {\em not\/} coincide with the standard usage of the
term \lq\lq connected\rq\rq.
On the other hand, an augmented differential graded algebra
is  {\em connected\/} in the usual sense
if and only  its augmentation ideal is,  as a chain complex,
connected in the reduced sense.

Henceforth a tensor algebra $\mathrm T[W]$
on a graded $R$-module $W$ will always be viewed as a
{\em filtered algebra\/}
with respect to the {\em augmentation filtration\/}.
Here is Theorem 2.2$^*$ of \cite{huebkade}.

\begin{Theorem} \label{2.2a}
Suppose that $\mathrm T[N]$ and $\mathrm T[M]$ are
connected and let 
$\partial$ be
a multiplicative perturbation of the differential on $\mathrm T[N]$
with respect to the augmentation filtration.
Then the perturbation 
$
\mathcal D
$
given by {\rm \eqref{1.1.1}}
together with the morphisms
given by {\rm \eqref{1.1.2} -- \eqref{1.1.4}}
yield
a contraction
\begin{equation}
   \left(\mathrm T_{\mathcal D}[M]
     \begin{CD}
      \null @>{\mathrm T_{\partial}\nabla}>> \null\\[-3.2ex]
     \null @<<{\mathrm T_{\partial}\ppi}< \null
     \end{CD}
   \mathrm T_{\partial}[N] ,\mathrm T_{\partial} h \right)
\label{2.2.1}
\end{equation}
of filtered 
differential 
graded
algebras which is natural in terms of the given data.
\end{Theorem}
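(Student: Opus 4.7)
The plan is to reduce to the ordinary perturbation lemma, Lemma \ref{ordinary}, applied to the filtered contraction \eqref{2.2.0} of augmented tensor algebras, and then to verify that the resulting chain-complex-level output respects the multiplicative structure. Convergence of the series \eqref{1.1.1}--\eqref{1.1.4}, with $\nabla, \ppi, h$ replaced by $\mathrm T\nabla, \mathrm T\ppi, \mathrm Th$ and the perturbation being $\partial$, is guaranteed by the connectedness hypothesis: the augmentation filtrations on $\mathrm T[N]$ and $\mathrm T[M]$ are then complete, and on each filtration quotient only finitely many tensor powers contribute, so that, since $\partial$ lowers the augmentation filtration by at least one, the infinite sums are filtration-finite and converge naively. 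Lemma \ref{ordinary} thus yields a contraction of filtered chain complexes of the shape \eqref{2.2.1}.

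The substantive step is to upgrade this to a contraction of filtered differential graded algebras. The key input is the tensor trick identity
\[
m(\mathrm{Id} \otimes \mathrm Th + \mathrm Th \otimes (\mathrm T\nabla)(\mathrm T\ppi)) = (\mathrm Th)\,m,
\]
recorded above, which together with the multiplicativity of $\mathrm T\nabla$ and $\mathrm T\ppi$ encodes that \eqref{2.2.0} is a contraction of augmented algebras. I would establish in turn that (a) $\mathcal D$ is a derivation of $\mathrm T[M]$, hence a multiplicative perturbation; (b) $\mathrm T_{\partial}\nabla$ and $\mathrm T_{\partial}\ppi$ are algebra morphisms with respect to the perturbed differentials; and (c) $\mathrm T_{\partial}h$ satisfies the analogous multiplicative homotopy identity now for the perturbed contraction. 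Each claim follows by expanding the alternating sums $\sum_n (-h\partial)^n$ and $\sum_n (-\partial h)^n$, then repeatedly commuting $\partial$ with $m$ using its derivation property on $\mathrm T[N]$ and commuting $\mathrm Th$ with $m$ using the tensor trick identity; the algebra morphism property of $\mathrm T\nabla$ and $\mathrm T\ppi$ handles the boundary terms.

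The main obstacle is the combinatorial bookkeeping in step (a): checking that $\mathcal D \circ m$ and $m \circ (\mathcal D \otimes \mathrm{Id} + \mathrm{Id} \otimes \mathcal D)$ agree as double sums of compositions involving arbitrarily many alternations of $\partial$ and $\mathrm Th$. The cleanest organization is to work filtration-stage by filtration-stage, so that only finitely many terms contribute at each stage, and to carry out the matching inductively on the number of factors of $\mathrm Th$. Once (a) is in hand, (b) and (c) are relatively routine consequences of the same rewriting recipe, and naturality of the entire construction in the initial data \eqref{266} and $\partial$ is immediate from the explicit formulas \eqref{1.1.1}--\eqref{1.1.4}.
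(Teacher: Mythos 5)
Your proposal is correct and takes essentially the same route as the paper: the paper likewise builds the filtered contraction of augmented algebras \eqref{2.2.0} via the tensor trick and then invokes the Algebra Perturbation Lemma of \cite{huebkade} (Lemma 2.1$^*$), whose content is precisely the verification you carry out in steps (a)--(c) that the output of the ordinary perturbation lemma applied to \eqref{2.2.0} is compatible with the multiplicative structure. The only difference is that you inline that verification, organized by the multiplicative homotopy identity for $\mathrm Th$ and induction on the augmentation filtration, rather than citing it as a packaged lemma.
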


Plainly the perturbation 
$
\mathcal D
$
on $\mathrm T[M]$ and the
morphisms 
$\mathrm T_{\partial}\nabla,\,
\mathrm T_{\partial}\ppi,\,
\mathrm T_{\partial}h $
are determined by their restrictions to
$N \subseteq \mathrm T[N]$
and
$M \subseteq \mathrm T[M]$
as appropriate.

The proof of Theorem \ref{2.2a} given in \cite{huebkade} relies on
the Algebra Perturbation Lemma
(\cite{huebkade} Lemma 2.1$^*$)
and involves 
the \lq\lq tensor trick\rq\rq.

We will now sketch a proof of Theorem \ref{c}.
Let $C$ be a coaugmented
differential graded coalgebra, 
with structure maps $\Delta$ and $\eta$,
and let
\[
\Omega C
=
\mathrm T_{\partial}[s^{-1}(\JJJ C)]
\]
be its cobar construction,
$\partial$ being the derivation
on the 
differential graded tensor algebra
$\mathrm T[s^{-1}(\JJJ C)]$ induced by the diagonal map of $C$.
Thus
with respect to the augmentation filtration,
$\mathrm T[s^{-1}(\JJJ C)]$
is a filtered differential graded algebra,
$\partial$ is a multiplicative perturbation, and
the cobar construction $\Omega C$ appears as a
\lq\lq perturbation\rq\rq\ 
of
$\mathrm T[s^{-1}(\JJJ C)]$.

Suppose momentarily that 
$C$ is  simply connected or concentrated in non-positive degrees.
Then  $N= s^{-1}(\JJJ C)$ is connected in the reduced sense, and
the algebra
$\mathrm T[s^{-1}(\JJJ C)]$
is complete.
The contraction \eqref{contco}
determines
a contraction
\Nsddata N\ppi{\nabla}{s^{-1}\JJJ M}h
of the kind used in  Theorem \ref{2.2a},
where now $s^{-1}\JJJ M$ plays the role of $M$ in  Theorem \ref{2.2a}.
Theorem \ref{2.2a} then yields a multiplicative perturbation
$\mathcal D$ 
of the differential
on 
$\mathrm T[{s^{-1}\JJJ M}]$
and a contraction
\begin{equation}
   \left(\mathrm T_{\mathcal D}[{s^{-1}\JJJ M}]
     \begin{CD}
      \null @>{\mathrm T_{\partial}\nabla}>> \null\\[-3.2ex]
     \null @<<{\mathrm T_{\partial}\ppi}< \null
     \end{CD}
   \Omega C ,\mathrm T_{\partial} h \right)
\label{2.2.12}
\end{equation}
of filtered differential graded algebras.
The naturality of the constructions implies that
$\mathrm T_{\partial}\ppi$
is the adjoint of a twisting cochain
\[
\tau\colon C \longrightarrow \mathrm T_{\mathcal D}[{s^{-1}\JJJ M}].
\]
A closer look reveals that
$\tau$ and $\mathcal D$ actually coincide with
\eqref{proc11} and \eqref{proc21}, respectively.
This establishes the statement of Complement 2 to Theorem \ref{c}
and yields, furthermore, explicit morphisms
which then can be extended, by suitable HPT-constructions,
to $A_{\infty}$-morphisms between $C$ and $M$ (with its
$A_{\infty}$-coalgebra structure)
and thus establish 
an $A_{\infty}$-equivalence  between $C$ and $M$.

To establish Theorem \ref {c} for a general coaugmented differential 
graded coalgebra $C$, we note that the contraction 
\eqref{2.2.0} of filtered algebras induces a contraction
\begin{equation}
   \left(\widehat{\mathrm T}[M]
     \begin{CD}
      \null @>{\widehat{\mathrm T}\nabla}>> \null\\[-3.2ex]
     \null @<<{\widehat{\mathrm T}\ppi}< \null
     \end{CD}
   \widehat{\mathrm T}[N] ,\widehat{\mathrm T} h \right)
\label{2.2.00}
\end{equation}
of complete algebras.
The statement of Theorem \ref{2.2a} extends to that situation
and a proof of Theorem \ref {c} for a general coaugmented differential 
graded coalgebra $C$ can then be concocted, just as for the
particular case handled first.

Finally we will indicate the
necessary modifications
to arrive at a proof of Theorem \ref{alg}.
Instead of the contraction \eqref{2.2.0}, we now consider the
corresponding contraction
\begin{equation}
   \left(\mathrm T^{\mathrm c} [M]
     \begin{CD}
      \null @>{\mathrm T^{\mathrm c}\nabla}>> \null\\[-3.2ex]
     \null @<<{\mathrm T^{\mathrm c}\ppi}< \null
     \end{CD}
   \mathrm T^{\mathrm c}[N] ,\mathrm T^{\mathrm c} h \right)
\label{2.2.0c}
\end{equation}
of coaugmented coalgebras.
Relative to the coaugmentation filtrations,
the  coalgebra version of Theorem \ref{2.2a}
is true without any connectivity assumption and takes the following form;
actually this is Theorem 2.2$_*$ of \cite{huebkade}, not spelled
out explicitly there.

\begin{Theorem} \label{2.2c}
Let
$\partial$ be
a coalgebra perturbation of the differential on $\mathrm T^{\mathrm c}[N]$
with respect to the coaugmentation filtration.
Then the perturbation 
$
\mathcal D
$
given by {\rm \eqref{1.1.1}}
together with the morphisms
given by {\rm \eqref{1.1.2} -- \eqref{1.1.4}}
yield
a contraction
\begin{equation}
   \left(\mathrm T^{\mathrm c}_{\mathcal D}[M]
     \begin{CD}
      \null @>{\mathrm T^{\mathrm c}_{\partial}\nabla}>> \null\\[-3.2ex]
     \null @<<{\mathrm T^{\mathrm c}_{\partial}\ppi}< \null
     \end{CD}
   \mathrm T^{\mathrm c}_{\partial}[N] ,\mathrm T^{\mathrm c}_{\partial} h \right)
\label{2.2.1.c}
\end{equation}
of filtered 
differential 
graded
coalgebras which is natural in terms of the given data.
\end{Theorem}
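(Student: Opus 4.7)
The plan is to mirror the proof of Theorem \ref{2.2a} in the dual (coalgebra) setting, the decisive difference being that the coaugmentation filtration on $\mathrm T^{\mathrm c}[N]$ is automatically exhaustive, so no connectivity assumption on $N$ or $M$ is required in order to guarantee convergence.

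First I would make explicit the coalgebra version of the tensor trick. On $N^{\otimes k}$ the homotopy reads
\begin{equation*}
\mathrm T^{\mathrm c} h\bigr|_{N^{\otimes k}} = \sum_{i=0}^{k-1} (\nabla\ppi)^{\otimes i}\otimes h \otimes \mathrm{Id}^{\otimes (k-1-i)},
\end{equation*}
and a direct check shows that, relative to the deconcatenation coproduct, $\mathrm T^{\mathrm c} h$ is an $(\mathrm{Id},\mathrm T^{\mathrm c}\nabla\circ\mathrm T^{\mathrm c}\ppi)$-coderivation, together with $D(\mathrm T^{\mathrm c} h) = (\mathrm T^{\mathrm c}\nabla)(\mathrm T^{\mathrm c}\ppi) - \mathrm{Id}$ and the three side conditions. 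This spells out \eqref{2.2.0c} as a contraction of coaugmented coalgebras, and the coaugmentation filtrations on $\mathrm T^{\mathrm c}[N]$ and $\mathrm T^{\mathrm c}[M]$ make it a contraction of filtered coaugmented coalgebras.

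Second I would address convergence. Because $\partial$ is a coalgebra perturbation with respect to the coaugmentation filtration, it lowers that filtration, and every element of $\mathrm T^{\mathrm c}[N]$ lies in some $\mathrm F_n\mathrm T^{\mathrm c}[N]$. Hence each power $(-h\partial)^m$ and $(-\partial h)^m$ lowers filtration by $m$, so when the series \eqref{1.1.1}--\eqref{1.1.4} are applied to a specific element, only finitely many terms contribute. The ordinary perturbation lemma (Lemma \ref{ordinary}) then produces the underlying contraction of chain complexes with perturbation $\mathcal D$ on $\mathrm T^{\mathrm c}[M]$ and chain maps $\mathrm T^{\mathrm c}_{\partial}\nabla$, $\mathrm T^{\mathrm c}_{\partial}\ppi$, $\mathrm T^{\mathrm c}_{\partial} h$.

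The remaining and main step is to upgrade this chain-level contraction to a contraction of filtered differential graded coalgebras, that is, to verify that $\mathcal D$ is a coalgebra perturbation on $\mathrm T^{\mathrm c}[M]$ and that $\mathrm T^{\mathrm c}_{\partial}\nabla$, $\mathrm T^{\mathrm c}_{\partial}\ppi$ are coalgebra morphisms, with $\mathrm T^{\mathrm c}_{\partial}h$ an appropriate coderivation homotopy. This is the dual of the Algebra Perturbation Lemma (Lemma 2.1$^*$ of \cite{huebkade}) and is the main obstacle: the argument proceeds term by term in the perturbation series, feeding in repeatedly the coderivation identity for $\mathrm T^{\mathrm c} h$ together with the fact that $\mathrm T^{\mathrm c}\nabla$ and $\mathrm T^{\mathrm c}\ppi$ are strict coalgebra morphisms and that $\partial$ commutes with the diagonal up to the appropriate coderivation identity. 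Because $\partial$ strictly lowers coaugmentation filtration, one can equivalently carry out the verification by induction on filtration degree, the induction terminating on each fixed element. The result is precisely the desired contraction \eqref{2.2.1.c}, and its naturality in the given data is inherited from the naturality of the ordinary perturbation lemma together with that of the tensor-coalgebra functor.
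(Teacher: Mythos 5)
Your proposal is correct and follows essentially the same route as the paper, which itself only records that the proof rests on the coalgebra version of the tensor trick (the contraction \eqref{2.2.0c}) together with the Coalgebra Perturbation Lemma (Lemma 2.1$_*$ of \cite{huebkade}), convergence being automatic because the tensor coalgebra is cocomplete and $\partial$ lowers the coaugmentation filtration. Your explicit formula for $\mathrm T^{\mathrm c}h$ is merely the mirror-image of the paper's convention for $\mathrm Th$ and satisfies the same side conditions, so nothing of substance differs.
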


The proof of Theorem \ref{2.2c} relies on
the Coalgebra Perturbation Lemma
(\cite{huebkade} Lemma 2.1$_*$)
and involves  likewise
the \lq\lq tensor trick\rq\rq.

Dualizing the above reasoning which leads to a proof of
Theorem \ref{c}, the reader is now invited to 
concoct a proof of  Theorem \ref{alg}.
We refrain from spelling out details.

\section{General $A_{\infty}$-algebras and $A_{\infty}$-coalgebras }
\label{ainf}

The reasoning in the previous section extends immediately to general
$A_{\infty}$-algebras and $A_{\infty}$-coalgebras
and thus yields solutions of the corresponding transference problems:

Let $A$ be an augmented $A_{\infty}$-algebra,
the  $A_{\infty}$-algebra structure being given 
by a coalgebra perturbation $\partial$ 
of the differential on $\mathrm T^{\mathrm c}[s \III A]$
relative to the coaugmentation filtration and, as before,
write $\rbar_{\partial }A$ for the perturbed
differential graded coalgebra.
Moreover, let
\begin{equation}
   (s\III\HHH
     \begin{CD}
      \null @>{\nabla}>> \null\\[-3.2ex]
      \null @<<{\pi}< \null
     \end{CD}
    s\III A, h)
\label{contalginf}
  \end{equation}
be a contraction of {\em chain complexes\/}.
Such a contraction arises plainly from a contraction
of augmented chain complexes from $A$ onto $\HHH$.

\begin{Theorem} \label{2.2ainf}
The perturbation 
$
\mathcal D
$
given by {\rm \eqref{1.1.1}}
yields an augmented $A_{\infty}$-algebra structure on $\HHH$,
and the morphisms
given by {\rm \eqref{1.1.2} -- \eqref{1.1.4}}
yield
a contraction
\begin{equation}
   \left( \rbar_{\mathcal D}\HHH
     \begin{CD}
      \null @>{\mathrm T^{\mathrm c}_{\partial}\nabla}>> \null\\[-3.2ex]
     \null @<<{\mathrm T^{\mathrm c}_{\partial}\ppi}< \null
     \end{CD}
\rbar_{\partial }A
,\mathrm T^{\mathrm c}_{\partial} h \right)
\label{2.2.1.cc}
\end{equation}
of filtered differential graded coalgebras which is natural in terms 
of the given data.
\end{Theorem}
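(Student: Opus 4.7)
The plan is to reduce Theorem~\ref{2.2ainf} directly to Theorem~\ref{2.2c} via the tensor trick. First, applying the tensor trick to the underlying chain complex contraction \eqref{contalginf}, with $N = s\III A$ and $M = s\III \HHH$, I would obtain a contraction of the shape \eqref{2.2.0c} of coaugmented differential graded coalgebras, filtered by the coaugmentation filtration. This step is purely formal: the tensor trick morphisms $\mathrm T^{\mathrm c}\nabla$, $\mathrm T^{\mathrm c}\ppi$, and $\mathrm T^{\mathrm c}h$ depend only on the underlying chain data together with the unperturbed tensor coalgebra structure.

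Second, by the Proposition in Section~\ref{infalg}, the augmented $A_{\infty}$-algebra structure on $A$ is the same data as a coalgebra perturbation $\partial$ of the differential on $\mathrm T^{\mathrm c}[s\III A]$ relative to the coaugmentation filtration, with perturbed coalgebra equal to the standard construction $\rbar_{\partial}A$.

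Third, I would invoke Theorem~\ref{2.2c} with this coalgebra perturbation $\partial$. It delivers a coalgebra perturbation $\mathcal D$ on $\mathrm T^{\mathrm c}[s\III \HHH]$ defined by \eqref{1.1.1}, together with transferred morphisms and homotopy given by \eqref{1.1.2}--\eqref{1.1.4}, assembling into exactly the contraction \eqref{2.2.1.cc} of filtered differential graded coalgebras. Applying the Proposition in Section~\ref{infalg} in the reverse direction, the coalgebra perturbation $\mathcal D$ translates back into an augmented $A_{\infty}$-algebra structure on $\HHH$ whose standard construction is $\rbar_{\mathcal D}\HHH$. Naturality is inherited from the naturality assertion in Theorem~\ref{2.2c}.

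The main obstacle lies not in this passage, which is essentially bookkeeping once the dictionary between $A_{\infty}$-algebras and coalgebra perturbations is in hand, but in the underlying Theorem~\ref{2.2c}: the subtlety is that the transferred operator $\mathcal D$ produced by the ordinary perturbation lemma, Lemma~\ref{ordinary}, is \emph{a priori} only a chain perturbation, and one must verify that it is in fact compatible with the coalgebra structure on $\mathrm T^{\mathrm c}[s\III \HHH]$. This compatibility is exactly the content of the Coalgebra Perturbation Lemma of \cite{huebkade}, applied via the tensor trick, and it is the reason no connectivity hypothesis on $\HHH$ is needed here, in contrast with the algebra-side Theorem~\ref{2.2a}.
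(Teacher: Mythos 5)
Your proposal is correct and follows exactly the route the paper takes: the paper's entire proof of Theorem \ref{2.2ainf} is the one-line observation that it is an immediate consequence of Theorem \ref{2.2c}, and your three steps (tensor trick to get the contraction \eqref{2.2.0c}, the dictionary between augmented $A_{\infty}$-structures and coalgebra perturbations of $\mathrm T^{\mathrm c}[s\III A]$, then Theorem \ref{2.2c} to transfer $\partial$ to $\mathcal D$) are precisely the bookkeeping that makes that one line work. Your closing remark correctly locates the real content in the Coalgebra Perturbation Lemma underlying Theorem \ref{2.2c}, including the absence of a connectivity hypothesis on the coalgebra side.
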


\begin{proof} This is an immediate consequence of
Theorem \ref{2.2c}.
\end{proof}

For the special case where $\HHH$ is the homology of $A$, Theorem
\ref {2.2ainf} yields the \lq\lq minimality theorem\rq\rq\ 
for general augmented $A_{\infty}$-algebras.

Likewise let $C$ be a coaugmented $A_{\infty}$-coalgebra
that is  simply connected or concentrated in non-positive degrees,
the  $A_{\infty}$-coalgebra structure being given 
by an algebra perturbation $\partial$ 
of the differential on $\mathrm T[s^{-1}\JJJ C]$
relative to the augmentation filtration and, as before,
write $\rcob_{\partial}C$ for the perturbed
differential graded coalgebra.
Moreover,
let
\begin{equation}
   (s\JJJ\HHH
     \begin{CD}
      \null @>{\nabla}>> \null\\[-3.2ex]
      \null @<<{\pi}< \null
     \end{CD}
    s\JJJ C, h)
\label{contcoalginf}
  \end{equation}
be a contraction of {\em chain complexes\/}.
Such a contraction arises plainly from a contraction
of coaugmented chain complexes from $C$ onto $\HHH$.

\begin{Theorem} \label{2.2cinf}
The perturbation 
$
\mathcal D
$
given by {\rm \eqref{1.1.1}}
yields a coaugmented $A_{\infty}$-coalgebra structure on $\HHH$,
and the morphisms
given by {\rm \eqref{1.1.2} -- \eqref{1.1.4}}
yield
a contraction
\begin{equation}
   \left(\rcob_{\mathcal D}M
     \begin{CD}
      \null @>{\mathrm T_{\partial}\nabla}>> \null\\[-3.2ex]
     \null @<<{\mathrm T_{\partial}\ppi}< \null
     \end{CD}
   \rcob_{\partial}C ,\mathrm T_{\partial} h \right)
\label{2.2.1.aa}
\end{equation}
of filtered 
differential 
graded
algebras which is natural in terms of the given data.
\end{Theorem}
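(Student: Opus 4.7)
The plan is to reduce Theorem \ref{2.2cinf} to Theorem \ref{2.2a} in exact parallel with the reduction of Theorem \ref{2.2ainf} to Theorem \ref{2.2c}; here one replaces \lq\lq coalgebra\rq\rq\ throughout by \lq\lq algebra\rq\rq\ and works on the cobar side rather than the bar side.

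First I would observe that the chain complex contraction \eqref{contcoalginf}, after desuspension, restricts to a contraction between $M := s^{-1}\JJJ\HHH$ and $N := s^{-1}\JJJ C$. The tensor trick recalled in and around \eqref{2.2.0} then promotes this to a contraction of filtered augmented differential graded algebras
\begin{equation*}
   \left(\mathrm T[s^{-1}\JJJ\HHH]
     \begin{CD}
      \null @>{\mathrm T\nabla}>> \null\\[-3.2ex]
     \null @<<{\mathrm T\ppi}< \null
     \end{CD}
   \mathrm T[s^{-1}\JJJ C] ,\mathrm T h \right)
\end{equation*}
relative to the augmentation filtrations. The hypothesis that $C$ be simply connected or concentrated in non-positive degrees makes $s^{-1}\JJJ C$ connected in the reduced sense, hence so is its retract $s^{-1}\JJJ\HHH$, so both tensor algebras are connected in the sense required by Theorem \ref{2.2a}.

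Next I would note that the $A_{\infty}$-coalgebra structure on $C$ is, by definition, precisely a multiplicative perturbation $\partial$ of the differential on $\mathrm T[s^{-1}\JJJ C]$ with respect to the augmentation filtration, and that the perturbed object is $\rcob_{\partial}C$. Invoking Theorem \ref{2.2a} directly on these data produces the multiplicative perturbation $\mathcal D$ on $\mathrm T[s^{-1}\JJJ\HHH]$ given by \eqref{1.1.1}, together with the transferred morphisms \eqref{1.1.2}--\eqref{1.1.4}, and asserts that they assemble into a contraction of filtered differential graded algebras. Because $\mathcal D$ is again a multiplicative perturbation, it defines, via the standard correspondence, a coaugmented $A_{\infty}$-coalgebra structure on $\HHH$, and the transferred object $\mathrm T_{\mathcal D}[s^{-1}\JJJ\HHH]$ is exactly $\rcob_{\mathcal D}M$; this is the content of \eqref{2.2.1.aa}.

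The main obstacle is convergence of the infinite series \eqref{1.1.1}--\eqref{1.1.4}. This is handled precisely by the connectivity hypothesis: the perturbation $\partial$ lowers the augmentation filtration, so in each filtration-degree piece only finitely many terms contribute, exactly as in the proof of Complement 2 to Theorem \ref{c}. Naturality in the data is inherited from the corresponding naturality clause of Theorem \ref{2.2a}, and I expect no further subtleties beyond those already treated on the bar side for Theorem \ref{2.2ainf}.
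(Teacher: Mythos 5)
Your proposal is correct and follows exactly the route the paper takes: the paper's own proof is the single line that Theorem \ref{2.2cinf} is an immediate consequence of Theorem \ref{2.2a}, and your write-up simply supplies the (correct) details of that reduction --- the tensor trick producing the contraction of filtered tensor algebras, the identification of the $A_{\infty}$-coalgebra structure on $C$ with the multiplicative perturbation $\partial$ on $\mathrm T[s^{-1}\JJJ C]$, and the connectivity hypothesis guaranteeing that Theorem \ref{2.2a} applies and that the series \eqref{1.1.1}--\eqref{1.1.4} converge. Nothing further is needed.
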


\begin{proof} This is an immediate consequence of
Theorem \ref{2.2a}.
\end{proof}

For the special case where $\HHH$ is the homology of $C$, Theorem
\ref {2.2cinf} yields the \lq\lq minimality theorem\rq\rq\ 
for general coaugmented $A_{\infty}$-coalgebras.

\begin{Remark}\label{general}
At the risk of making a mountain out of a molehill we
point out that
Theorem {\rm \ref{2.2ainf}} is more general than
Theorem  {\rm \ref{alg}} since, in 
Theorem {\rm \ref{2.2ainf}}, $A$ is a general (augmented)
$A_{\infty}$-algebra;
likewise, 
Theorem {\rm \ref{2.2cinf}} is more general than
Theorem  {\rm \ref{c}} since, in
Theorem {\rm \ref{2.2cinf}}, $C$ is a general (coaugmented)
$A_{\infty}$-coalgebra.
In other words, 
Theorem {\rm \ref{2.2ainf}}
provides a solution of the
transference problem for $A_{\infty}$-algebra structures
and
Theorem {\rm \ref{2.2cinf}}
provides a solution of the
transference problem for $A_{\infty}$-coalgebra structures.
\end{Remark}

\section{Summation over oriented rooted planar trees}\label{labell}

In \cite{kontsotw} it has been observed that 
$A_{\infty}$-algebra structures of the kind reproduced in previous
sections can be described in terms of sums over oriented 
rooted planar trees endowed with suitable labels.
Indeed the authors of \cite{kontsotw} bravely acknowledged
that the  oriented rooted planar trees description
is essentially a reworking of the earlier HPT-constructions.
We will now explain
how these sums over oriented rooted 
planar trees come out of the HPT-constructions.

We return to the circumstances of Theorem \ref{alg} above.
We denote the multiplication map
of $A$ by $\mu\colon A \otimes A \to A$.

By construction, the operation $m_2\colon \HHH \otimes \HHH \to \HHH$
is the composite
\begin{equation*}
\begin{CD}
M \otimes M @>{\nabla \otimes \nabla}>> A \otimes A
@>\mu>> A @>\pi>> M .
\end{CD}
\end{equation*}
This is interpreted as an oriented rooted
planar tree with three edges and four vertices,
one vertex where the three edges meet and an end point vertex for each edge.
The vertex where three edges meet is labelled $\mu$, the root vertex
is labelled $\pi$, and the two remaining vertices
are labelled $\nabla$.
The two edges having a vertex labelled $\nabla$ are oriented from
$\nabla$ to $\mu$ whereas the edge having $\mu$ and $\pi$ as vertices
is oriented from $\mu$ to $\pi$.

For $j \geq 1$, simply by construction,
the homogeneous degree $j-1$ operation 
\[
m_{j+1}=s^{-1} \circ \mathcal D^j\circ s^{\otimes (j+1)}
\colon (\III\HHH)^{\otimes (j+1)}
\longrightarrow \III\HHH
\]
is given by
\[
 \pi \circ(\tau^1\cup \tau^{\jj} +
\dots + \tau^{\jj}\cup \tau^1)\circ s^{\otimes (j+1)}
\]
and is thus the sum of the $\jj$ terms
$\pi \circ (\tau^\ell \cup \tau^{\jj+1 -\ell})\circ s^{\otimes (j+1)}$
where $1 \leq \ell \leq \jj$.
Each of these $\jj$ terms can be represented by an
 oriented rooted
planar tree with suitable labels, in the following way:

The operation $m_2$
has already been dealt with.
The operation $m_3\colon \HHH^{\otimes 3} \to \HHH$
is the sum of the two composite morphisms
\begin{equation*}
\begin{CD}
M \otimes M \otimes M @>{\nabla^{\otimes 3}}>> A \otimes A \otimes A
@>{A\otimes \mu}>> A \otimes A
@>{A\otimes h}>> A \otimes A
@>\mu>> A @>\pi>> M 
\end{CD}
\end{equation*}
and
\begin{equation*}
\begin{CD}
M \otimes M \otimes M @>{\nabla^{\otimes 3}}>> A \otimes A \otimes A
@>{\mu\otimes A}>> A \otimes A
@>{h\otimes A}>> A \otimes A
@>\mu>> A @>\pi>> M .
\end{CD}
\end{equation*}
Each of them is interpreted as an 
oriented rooted planar tree 
with four external edges, one internal edge, four 
external vertices, and two internal vertices in the following manner:

\noindent(i) Three external vertices are labelled $\nabla$;
these correspond to the three tensor factors $\nabla$ in the above 
constituent
\[
\nabla^{\otimes 3}\colon M \otimes M \otimes M  
\longrightarrow
A \otimes A \otimes A;
\]
(ii) one external  vertex---the root vertex---is labelled $\pi$;
this corresponds to the right-most arrow $\pi \colon A \to M$;

\noindent(iii)
the two internal vertices are labelled $\mu$;
in the upper morphism,
these correspond to the arrows labelled $\mu \colon A \otimes A \to A$
and $A \otimes \mu \colon A \otimes A   \otimes A \to A \otimes A$;
in the lower morphism, they correspond
to the arrows labelled $\mu \colon A \otimes A \to A$
and
$\mu \otimes A \colon A \otimes A   \otimes A \to A \otimes A$;

\noindent(iv)
the single
internal edge is labelled $h$;
this corresponds to
the arrow labelled 
\[
A \otimes h \colon  A   \otimes A \longrightarrow A \otimes A
\]
in the upper morphism and labelled
\[ 
h\otimes A \colon  A   \otimes A \longrightarrow A \otimes A
\]
in the lower morphism;

\noindent(v)
the three external edges having
$\nabla$ as a vertex are oriented from the vertices labelled $\nabla$
to the vertices labelled $\mu$;

\noindent(vi) 
the remaining external edge is 
oriented from a vertex labelled $\mu$ to the root vertex labelled $\pi$;

\noindent(vii)
two edges having a vertex labelled $\nabla$ as starting point
meet at a vertex labelled $\mu$, this vertex is joined to the other vertex
labelled $\mu$, oriented in that manner, and it meets the third edge
having a vertex labelled $\nabla$ as starting point at its end point.

\noindent
There are two such oriented rooted planar trees,
one being the mirror image of the other,
and the two composite morphisms spelled out above correspond to these
two labelled oriented rooted planar trees.

Likewise 
the operation $m_4\colon \HHH^{\otimes 4} \to \HHH$
is the sum of three composite morphisms of a similar nature
which, in the language of twisting cochains,
arise from the three constituents
$\tau^1\cup \tau^3$, $\tau^2\cup \tau^2$, and $\tau^3\cup \tau^1$,
cf. \eqref{proc333} above; each such composite morphisms
can be encoded in
the appropriate labelled oriented rooted planar tree.

Formalizing this procedure one arrives at the
description of the operations $m_j$ in terms of sums over
labelled oriented rooted planar trees worked out in detail in
\cite{kontsotw}.

The requisite combinatorics for the construction
in Theorem \ref{alg} 
is provided  by the concept of cofree coalgebra;
likewise in Theorem \ref{c} the necessary combinatorics
is provided  by the concept of free algebra.
The machinery of
labelled oriented rooted planar trees yield an alternate description of the
requisite combinatorial tool.

This discussion so far refers to
ordinary strict algebra (or coalgebra) structures
on the larger object coming into play in the corresponding
contraction.
Instead of the original contraction \eqref{contalg}
where the constituent $A$ is an ordinary augmented differential
graded algebra, 
consider now a contraction of the kind 
\eqref{contalginf} where $A$ is
merely an $A_{\infty}$-algebra,
with structure maps
\[
\mu_j\colon A^{\otimes j} \longrightarrow A\ (j \geq 1).
\]
The construction
in \cite{kontsotw} in terms of
labelled oriented rooted planar trees
extends to that situation. Indeed, 
define the {\em arity\/} of a vertex to be the number of incoming edges.
To the trees
described above having only vertices of arity 2, 
one simply adds trees where internal vertices
$v$ have general arities $\jj > 2$, a vertex of arity $\jj > 2$ being
labelled by the operation $\mu_{\jj}$; one then takes the sum over
all labelled oriented rooted planar trees.
This kind of construction yields an $A_{\infty}$-algebra structure
on $\HHH$ and, extended  suitably,
it also yields an  $A_{\infty}$-equivalence
between $A$ and $\HHH$.

However, unravelling the perturbation $\mathcal D$ 
on $\mathrm T^{\mathrm c}[s \III \HHH]$
spelled out in
Theorem \ref{2.2ainf}, we find precisely that very same
$A_{\infty}$-structure as that given by the
labelled oriented rooted planar trees. Indeed, the infinite series
\eqref{1.1.1}, evaluated relative to the contraction
\eqref{2.2.0c},
takes the form
\begin{equation}
\dell =  
(\mathrm T^{\mathrm c}\ppi)    \partial 
(\mathrm T^{\mathrm c}\nabla)
- (\mathrm T^{\mathrm c} \ppi) \partial
(\mathrm T^{\mathrm c} h)\partial 
(\mathrm T^{\mathrm c}\nabla) +
(\mathrm T^{\mathrm c}\ppi)    \partial 
(\mathrm T^{\mathrm c} h)      \partial 
(\mathrm T^{\mathrm c} h)      \partial
(\mathrm T^{\mathrm c}\nabla) + \ldots.
\label{trees}
\end{equation}
Now, when $\partial$ arises from an ordinary associative differential
graded algebra structure on $A$,
the first term in the development \eqref{trees}
yields, on $\HHH$,
the $A_{\infty}$-constituent $m_2$,
the second term yields the $A_{\infty}$-constituent $m_3$,
and so forth, precisely in the form given by the
labelled oriented rooted planar trees construction.
More general, when $\partial$ arises from a general
$A_{\infty}$-algebra structure 
$\{\mu_j\}_j$ on $A$,
the perturbation
$\partial$ on $\mathrm T^{\mathrm c}[s \III A]$ has the form
\[
\partial = \partial^1 + \partial^2 + \ldots
\]
in such a way that, for $j \geq 1$, the constituent $\partial^j$
corresponds to $\mu_{j+1}$.
Consequently each term in the development \eqref{trees}
involves all the constituents $\mu_j$,
and reordering the terms that show up in
\eqref{trees},
we obtain precisely the $A_{\infty}$-constituent $m_j$
in the form given by the
labelled oriented rooted planar trees construction
for the transference of a general $A_{\infty}$-algebra structure.
Likewise, exploiting the series \eqref{1.1.2},
\eqref{1.1.3}, and \eqref{1.1.4}
to unravel the other terms, respectively, 
$\mathrm T^{\mathrm c}_{\partial}\nabla$,
$\mathrm T^{\mathrm c}_{\partial}\ppi$,
and $\mathrm T^{\mathrm c}_{\partial} h $
that are spelled out in
Theorem \ref{2.2ainf},
we obtain the requisite remaining data
that establish the necessary chain equivalence,
precisely in the form given by the
corresponding labelled oriented rooted planar trees constructions
for the transference of a general $A_{\infty}$-algebra structure.

The same kind of remark applies to the dual situation
encapsulated in Theorem \ref{2.2cinf}.
The construction of these perturbations $\mathcal D$ 
on the tensor algebra or tensor coalgebra
relies on the {\em tensor trick\/}
mentioned above, which we developed in 
\cite{homotype}.

Thus we see that
the more recent constructions 
of an $A_{\infty}$-structure
in \cite{kontsotw} (6.4) and \cite{merkutwo}
still come down to the earlier constructions.

\begin{Remark}\label{merkulov}
{\rm 
The construction in  {\rm \cite{merkutwo}}
is slightly more general in the sense that the initial data
considered there are required to satisfy  
requirements somewhat weaker than those which characterize
an ordinary contraction.
Indeed, in {\rm \cite{merkutwo}},
a system
\begin{equation}
   \left(N
     \begin{CD}
      \null @>{\nabla}>> \null\\[-3.2ex]
      \null @<<{\pi}< \null
     \end{CD}
    M, h\right) \label{coo}
  \end{equation}
of chain complexes is explored satisfying the requirements
\eqref{co1} and \eqref{side} but not necessarily \eqref{co0},
that is, it is {\em not\/} required that
$\pi \nabla = \mathrm{Id}$; indeed, no condition is imposed
upon $\pi \nabla$.
We will now show that\/}
application of the constructions in the perturbation lemma
without the requirement
that
$\pi \nabla = \mathrm{Id}$ does not
lead to a more general theory.

{\rm Indeed, a system of the kind {\rm \eqref{coo}} 
can arise by the following specific construction
from an ordinary contraction
and in fact every system of the kind {\rm \eqref{coo}} arises in this way:
Consider an ordinary contraction
\begin{equation}
   \left(N_1
     \begin{CD}
      \null @>{\nabla_1}>> \null\\[-3.2ex]
      \null @<<{\pi_1}< \null
     \end{CD}
    M, h\right) \label{coo1}
  \end{equation}
of chain complexes, let $N_2$ be an arbitrary chain complex,
let $N=N_1\oplus N_2$, let $\pi\colon M \to N$ be the
composite of $\pi_1$ with the canonical injection into $N$, 
let 
\[
\nabla_2\colon N_2 \to \mathrm{ker}(\pi)=\mathrm{ker}(\pi_1) \subseteq M
\]
be a chain map,
and let
$\nabla=(\nabla_1,\nabla_2)\colon N_1\oplus N_2 \to M$. Then
\begin{equation}
   \left(N
     \begin{CD}
      \null @>{\nabla}>> \null\\[-3.2ex]
      \null @<<{\pi}< \null
     \end{CD}
    M, h\right) \label{coo2}
  \end{equation}
is a system of the kind \eqref{coo}.
We will now show that\/} every system of the kind {\rm \eqref{coo}}
arises in this way. 
Given a perturbation of the differential on $M$,
application of the perturbation lemma
involves only the summand  $N_1$ of $N$
and leaves $N_2$ unchanged
in the sense that this application
yields a new system of the kind
{\rm \eqref{coo2}} where the new contraction of the kind {\rm \eqref{coo1}}
arises by an application of the perturbation lemma
to the contraction of the kind {\rm \eqref{coo1}} before
application of the perturbation and where the summand $N_2$ remains unchanged.
Hence application of the constructions in the perturbation lemma
without the requirement
that
$\pi \nabla = \mathrm{Id}$ does not
lead to a more general theory.

{\rm Thus consider a system of the kind \eqref{coo}.
The requirement \eqref{co1}, viz.
\[
Dh = \mathrm{Id} -\nabla \pi,
\]
implies
\[
D(\pi h\nabla) = \pi \nabla -\pi\nabla \pi\nabla
\]
which, in view of the annihilation properties \eqref{side}, comes down to
$\pi \nabla =\pi\nabla \pi\nabla$.
Hence the endomorphism $\pi \nabla$ of $N$
is a projector. Let $N_1= \pi \nabla(N)$ and $N_2= (\mathrm{Id}-\pi \nabla)(N)$.
Then 
$N=N_1\oplus N_2$.
Let $\pi_1\colon M \to N_1$ be the
composite of $\pi$ with the canonical projection to $N_1$, and let
$\nabla_1\colon N_1 \to M$ be the injection of $N_1$ into $N$, followed by
$\nabla$.}
The resulting data
\begin{equation}
   \left(N_1
     \begin{CD}
      \null @>{\nabla_1}>> \null\\[-3.2ex]
     \null @<<{\pi_1}< \null
    \end{CD}
    M, h\right) \label{coo3}
 \end{equation}
constitute a contraction of chain complexes.

{\rm 
Indeed, to understand the situation, suppose momentarily that
$N_1$ is zero, that is, the composite $\pi \nabla$ is zero.
Then $\nabla \pi= \nabla \pi \nabla \pi = 0$ whence
$Dh = \mathrm{Id}$, that is, $M$ is contractible, the homotopy $h$ being a 
conical contracting homotopy.
In the general case 
where $N_1$ is not necessarily zero,
let $M_1=\nabla(N_1)$ and $M_2=\mathrm{ker}(\pi)$.
The requirement \eqref{co1}, viz.
$Dh = \mathrm{Id} -\nabla \pi$, together with the annihilation properties
\eqref{side}, implies that
\[
M =\mathrm{ker}(\pi) + \nabla \pi (M) =\mathrm{ker}(\pi) + \nabla (N)=
\mathrm{ker}(\pi) + \nabla (N_1)+  \nabla (N_2).
\]
But $\pi \nabla(N_2)$ is zero whence
$\nabla (N_2) \subseteq \mathrm{ker}(\pi)$ and thence
\[
M =\mathrm{ker}(\pi) + \nabla  (N_1) = M_1 + \mathrm{ker}(\pi)= M_1 + M_2.
\]
By construction,  $\pi$ restricted to $M_2$ is zero
and
$\pi$ restricted to $M_1$ amounts to the restriction
of $\pi_1$ to $M_1$ which, in turn is an isomorphism having $\nabla_1$
as its inverse. Moreover, exploiting once more the fact that
$\pi \nabla(N_2)$ is zero we conclude that
$\nabla$ restricted to $N_2$ is a morphism of the kind
$\nabla_2\colon N_2 \to M_2$. 
Hence the decomposition $M=M_1+M_2$ is a direct sum decomposition;
the obvious inclusion 
$\mathrm{ker}(\pi_1)\subseteq \mathrm{ker}(\pi)$ is the identity;
the 
original system \eqref{coo} can be written as
\begin{equation}
   \left(N_1\oplus N_2
     \begin{CD}
      \null @>{(\nabla_1,\nabla_2)}>> \null\\[-3.2ex]
     \null @<<{(\pi_1,0)}< \null
    \end{CD}
     M_1\oplus M_2, h\right) ; \label{coo5}
 \end{equation}
and the annihilation properties \eqref{side} imply that
$h(M_1)$ is zero and that $h(M_2) \subseteq M_2$.
Indeed, since $h\nabla$ is zero, $h$ vanishes on $M_1$ and, likewise,
since
$\pi h$ is zero, $h(M_2) \subseteq M_2$.
Hence \eqref{coo5} actually takes the form
\begin{equation}
   \left(N_1\oplus N_2
     \begin{CD}
      \null @>{(\nabla_1,\nabla_2)}>> \null\\[-3.2ex]
     \null @<<{(\pi_1,0)}< \null
    \end{CD}
     M_1\oplus M_2, (0,h_2)\right)  \label{coo6}
 \end{equation}
and the system
\begin{equation}
   \left(N_2
     \begin{CD}
      \null @>{\nabla_2}>> \null\\[-3.2ex]
     \null @<<{0}< \null
    \end{CD}
     M_2, (0,h_2)\right)  \label{coo7}
 \end{equation}
is of the kind of the special case where $N_1$ is zero---indeed,
the corresponding morphism $\pi_1$ is now even zero---whence, in particular,
$h_2$ is a conical contracting homotopy for $M_2$.
Consequently
\begin{equation}
   \left(N_1
     \begin{CD}
      \null @>{(\nabla_1,0)}>> \null\\[-3.2ex]
     \null @<<{(\pi_1,0)}< \null
    \end{CD}
     M_1\oplus M_2, (0,h_2)\right) , \label{coo8}
 \end{equation}
is an ordinary contraction as asserted and the original system \eqref{coo}
is indeed of the special kind \eqref{coo2}.

}
\end{Remark}
\section{Perturbations for Lie algebras and 
$L_{\infty}$-algebras, and the master equation}

Let $\mathfrak g$  be a differential graded $R$-Lie algebra
that is projective as a graded $R$-module
 and let
\begin{equation}
   (\HHH
     \begin{CD}
      \null @>{\nabla}>> \null\\[-3.2ex]
      \null @<<{\pi}< \null
     \end{CD}
    \mathfrak g, h)
\label{contlie}
  \end{equation}
be a contraction of {\em chain complexes\/}.
Suppose that
the cofree coaugmented differential graded cocommutative
coalgebra $\Sigm^{\mathrm c}[s\fra g]$ on the suspension $s\fra g$
of $\fra g$ and, likewise,
the cofree coaugmented differential graded cocommutative
coalgebra
$\Sigm^{\mathrm c} = \Sigm^{\mathrm c}[s\HHH]$
on the suspension $s\HHH$ of $\HHH$ exist.
This kind of coalgebra is well
known to be cocomplete.
Further, let
$\dd^0\colon \Sigm^{\mathrm c} \longrightarrow \Sigm^{\mathrm c}$
denote the coalgebra differential on $\Sigm^{\mathrm c} =
\Sigm^{\mathrm c}[s\HHH]$ induced by the differential on $\HHH$. For $b
\geq 0$, we will henceforth denote the homogeneous degree $b$
component of $\Sigm^{\mathrm c}[s\HHH]$ by $\Sigm_b^{\mathrm c}$;
thus, as a chain complex, $\mathrm F_b\Sigm^{\mathrm c} = R \oplus
\Sigm_1^{\mathrm c} \oplus \dots \oplus \Sigm_b^{\mathrm c}$.
Likewise, as a chain complex, $\Sigm^{\mathrm c} =
\oplus_{j=0}^{\infty} \Sigm_j^{\mathrm c}$. We denote by
\[
\tau_{\HHH}\colon \Sigm^{\mathrm c} \longrightarrow \HHH
\]
the composite of the canonical projection $\mathrm{proj}\colon
\Sigm^{\mathrm c} \to s\HHH$ from $\Sigm^{\mathrm c} = \Sigm^{\mathrm
c}[s\HHH]$ to its homogeneous degree 1 constituent $s\HHH$ with the
desuspension map $s^{-1}$ from $s\HHH$ to $\HHH$.
When $\HHH$ is viewed as an {\em
abelian\/} differential graded Lie algebra, $\Sigm^{\mathrm
c}=\Sigm^{\mathrm c}[s\HHH]$ may be viewed as the {\scs CCE\/} or
{\em classifying\/} coalgebra $\mathcal C [\HHH]$ for $\HHH$, and
$\tau_{\HHH} \colon \Sigm^{\mathrm c} \to \HHH$ is then the universal
differential graded Lie algebra twisting cochain for $\HHH$.

Let
\begin{equation}
\tau^1 = \nabla \tau_{\HHH}\colon  
\Sigm^{\mathrm c} \to \fra g
\label{proc00001}
\end{equation}
and, for $\jj \geq 2$, 
let
\[
\tau^j \colon  
\Sigm^{\mathrm c}
\to \fra g
\]
be the degree $-1$ morphism defined recursively by
\begin{equation}
\tau^{\jj} = 
 \tfrac 12 h([\tau^1,\tau^{{\jj}-1}] +  \dots +
[\tau^{{\jj}-1},\tau^1])\colon\Sigm^{\mathrm c} \to \fra g.
\label{proc0011}
\end{equation}
Thereafter, for $\jj \geq 1$,
define the degree $-1$ coderivation
$\mathcal D^j$ on 
$\Sigm^{\mathrm c}$
by
\begin{equation}
\tau_{\HHH} \mathcal D^{\jj} =  
\tfrac 12 \pi ([\tau^1,\tau^{\jj}] +
\dots + [\tau^{\jj},\tau^1]) \colon \Sigm_{\jj+ 1}^{\mathrm c} \to
\HHH. \label{proc00333}
\end{equation}
In particular, for $\jj \geq 1$, the coderivation $\mathcal
D^{\jj}$ is zero on $\mathrm F_j\Sigm^{\mathrm c} $ 
and lowers
coaugmentation filtration by $\jj$.

We now spell out the main result of \cite{pertlie}.

\begin{Theorem}\label{liealg}
The infinite series
\begin{equation}
\mathcal D = \mathcal D^1 + \mathcal D^2 + \dots \colon
\Sigm^{\mathrm c} \to \Sigm^{\mathrm c}
 \label{proc02111}
\end{equation}
is a coalgebra perturbation of the differential $d$ on
$\Sigm^{\mathrm c}$
induced from the differential on $\HHH$, and the infinite series
\begin{equation}
\tau = \tau^1 + \tau^2 + \dots \colon 
\Sigm^{\mathrm c} \to \fra g
\label{proc11111}
\end{equation}
is a Lie algebra twisting cochain
\[
(\Sigm^{\mathrm c}, d + \mathcal D)
\longrightarrow \fra g.
\]
Furthermore, the adjoint 
\[
\overline \tau \colon(\Sigm^{\mathrm c}, d + \mathcal D)
\longrightarrow 
\mathcal C \fra g
\]
of the twisting cochain $\tau$, necessarily 
a morphism of differential graded coalgebras, is a chain equivalence.
More precisely, the data determine a
contraction
\begin{equation*}
   \left((\Sigm^{\mathrm c}[s\HHH],d + \ppartial)
     \begin{CD}
      \null @>{\overline \tau}>> \null\\[-3.2ex]
      \null @<<{\Pi}< \null
    \end{CD}
   \mathcal C[\fra g], H\right)
 \end{equation*}
of chain complexes which is natural in terms of the data.
\end{Theorem}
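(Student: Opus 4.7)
The plan is to mirror the proofs sketched for Theorems \ref{c} and \ref{alg}, with the tensor (co)algebra tensor trick replaced by its cofree cocommutative-coalgebra counterpart, and then to invoke the coalgebra perturbation lemma in the form of Theorem \ref{2.2c} applied to $\Sigm^{\mathrm c}$. First I would extend the contraction \eqref{contlie} to a contraction of coaugmented differential graded cocommutative coalgebras
\begin{equation*}
   \left(\Sigm^{\mathrm c}[s\HHH]
     \begin{CD}
      \null @>{\Sigm^{\mathrm c}\nabla}>> \null\\[-3.2ex]
     \null @<<{\Sigm^{\mathrm c}\pi}< \null
    \end{CD}
     \Sigm^{\mathrm c}[s\fra g], \Sigm^{\mathrm c} h\right) ,
\end{equation*}
where $\Sigm^{\mathrm c}\nabla$ and $\Sigm^{\mathrm c}\pi$ are the cofree extensions of $\nabla$ and $\pi$ and $\Sigm^{\mathrm c} h$ is built by the cocommutative analogue of the tensor trick. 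The projectivity of $\fra g$ together with the standing hypothesis guaranteeing existence of the symmetric coalgebras in play makes this construction well defined, and the side conditions and annihilation properties are inherited from those of $h$ by a symmetric version of the computation used for the tensor algebras in Section \ref{algebra}.

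Second, by the construction recalled in the preliminaries, the bracket on $\fra g$ yields via \eqref{proc0333} a coalgebra perturbation $\partial$ of the differential on $\Sigm^{\mathrm c}[s\fra g]$, and the resulting perturbed coalgebra is precisely $\mathcal C[\fra g]$. Third, I would apply the coalgebra perturbation lemma (Theorem \ref{2.2c}, in its symmetric-coalgebra avatar) to this contraction and the perturbation $\partial$. This produces, in a single stroke, a coalgebra perturbation $\mathcal D$ of the differential on $\Sigm^{\mathrm c}[s\HHH]$ given by the series \eqref{1.1.1}, transferred maps $\overline\tau$ and $\Pi$ given by \eqref{1.1.2} and \eqref{1.1.3}, and a transferred homotopy $H$ given by \eqref{1.1.4}, assembling into the claimed contraction onto $\mathcal C[\fra g]$ with naturality built in. Under the bijective correspondence between morphisms of coaugmented differential graded cocommutative coalgebras into $\mathcal C[\fra g]$ and Lie algebra twisting cochains into $\fra g$, the adjoint $\tau = \tau_{\fra g}\overline\tau$ of $\overline\tau$ is then automatically a Lie algebra twisting cochain, so that the master equation \eqref{master} is equivalent to $\mathcal D$ being a coalgebra perturbation.

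Fourth, and this is where the bulk of the work lies, one must unravel the operators produced by the perturbation lemma and check that they agree with the recursive formulas \eqref{proc00001}, \eqref{proc0011}, \eqref{proc00333} defining the $\tau^j$ and $\mathcal D^j$. The translation rests on the observation that, once the cofree extensions are plugged in, the composites $\pi(-\partial h)^n \partial\nabla$ decompose homogeneous-filtration by homogeneous-filtration into iterated cup brackets of copies of $\tau^1 = \nabla\tau_{\HHH}$ with the operator $h$ inserted exactly where the recursion places it; the factor $\tfrac12$ in \eqref{proc0011} and \eqref{proc00333} reflects the graded skew-symmetry of the bracket, and the signs in \eqref{1.1.1}--\eqref{1.1.4} are absorbed into the Koszul signs built into the cup bracket. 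Since each $\mathcal D^j$ lowers coaugmentation filtration by $j$ and $\Sigm^{\mathrm c}$ is cocomplete, when applied to a fixed element only finitely many of the operators in \eqref{proc02111} and \eqref{proc11111} contribute nontrivially, so naive convergence holds.

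The main obstacle is step four, namely the combinatorial bookkeeping that matches the iterated composites produced by the perturbation lemma with the symmetrized iterated cup brackets appearing in \eqref{proc0011} and \eqref{proc00333}, while tracking Eilenberg--Koszul signs and symmetrization factors through the identification. Once this verification is completed, all remaining assertions of the theorem, including the master equation, the contraction structure, and its naturality in the data, follow formally from Theorem \ref{2.2c} together with the universal property of $\mathcal C[\fra g]$.
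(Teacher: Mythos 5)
Your overall strategy---extend the contraction \eqref{contlie} to a contraction of cofree cocommutative coalgebras by a symmetric analogue of the tensor trick and then apply a symmetric-coalgebra avatar of Theorem \ref{2.2c} to the coalgebra perturbation $\partial$ encoding the bracket---is precisely the route the paper warns against. Remark \ref{mastereq} states explicitly that the tensor trick \emph{breaks down} for cocommutative coalgebras and that the notion of homotopy of morphisms of cocommutative coalgebras is a subtle concept. The concrete obstruction sits in your first step: the operator $\mathrm T h$ of Section \ref{algebra}, given on the $k$-th tensor power by $h\otimes(\nabla\pi)^{\otimes(k-1)}+\mathrm{Id}\otimes h\otimes(\nabla\pi)^{\otimes(k-2)}+\dots+\mathrm{Id}^{\otimes(k-1)}\otimes h$, is not equivariant for the symmetric group action, so it does not restrict to the submodule of invariants $\Sigm^{\mathrm c}_k[s\fra g]\subseteq\mathrm T^{\mathrm c}_k[s\fra g]$; there is no canonical ``cofree extension'' $\Sigm^{\mathrm c}h$, and symmetrizing requires dividing by $k!$ and in any case destroys the annihilation properties you claim are ``inherited by a symmetric version of the computation used for the tensor algebras''. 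Consequently the contraction of coaugmented cocommutative coalgebras posited in your first step is not available, Theorem \ref{2.2c} cannot be invoked in the form you need, and steps two through four, which are otherwise reasonable, have nothing to rest on. This failure is exactly why \cite{huebstas} and then \cite{pertlie} had to be written.

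The paper itself does not reprove the theorem: it cites Theorem 2.1 of \cite{pertlie}, where the argument is a direct one, proceeding by induction on the coaugmentation filtration of $\Sigm^{\mathrm c}[s\HHH]$ to establish the recursive formulas \eqref{proc0011} and \eqref{proc00333}, the master equation for $\tau$, the identity $(d+\mathcal D)(d+\mathcal D)=0$, and the contraction data $\overline\tau$, $\Pi$, $H$ by hand, rather than obtaining them as output of a structured (co)algebra perturbation lemma as in the associative case. If you wish to repair your proposal, you must either reproduce that direct inductive verification or supply a genuinely new substitute for the missing cocommutative tensor trick; as written, the proposal has a gap at its foundation.
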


This is precisely Theorem 2.1 in \cite{pertlie}
where also a complete proof can be found.
For the special case where $\HHH$ is the homology of $\fra g$,
this result yields the 
\lq\lq minimality theorem\rq\rq\ for ordinary differential 
graded Lie algebras.

\begin{Remark}\label{mastereq}
The attempt to treat,
as for the cases explained in Section {\rm \ref{algebra}} above,
the requisite higher
homotopies by means of a suitable version of the perturbation lemma 
relative to the
{\em additional algebraic structure\/}, that is, to develop a version of
the perturbation lemma {\em compatible with Lie brackets or more
generally with sh-Lie structures\/}, led to the paper
{\rm\cite{huebstas}}, but technical complications arise since
the {\em tensor trick\/}
breaks down for cocommutative
coalgebras; indeed, the notion of {\em homotopy of morphisms of
cocommutative coalgebras is a subtle concept\/} {\rm\cite{schlstas}},
and only a special case was handled in {\rm\cite{huebstas}}, with some
of the technical details merely sketched. The 
article {\rm \cite{pertlie}}
provides 
a complete solution with
{\em all\/} the necessary details and handles the case of
a {\em general contraction\/} whereas in {\rm \cite{huebstas}} only the
case of a contraction of a differential graded Lie algebra onto
its {\em homology\/} was treated.
Also in {\rm \cite{huebstas}}, the proof is only sketched, and
a detailed proof is given in {\rm \cite{pertlie}}.
The twisting cochain $\tau$ is the most general solution of the 
{\em master equation\/}, under the circumstances of {\rm \cite{pertlie}}.
\end{Remark}

We will  write
\begin{equation}
\mathcal C_{\mathcal D}\HHH=
(\Sigm^{\mathrm c}, d + \mathcal D) .
\label{shl}
\end{equation}
The piece of structure
$\mathcal D$ in
$\mathcal C_{\mathcal D}\HHH$ is precisely an
$L_{\infty}$-algebra structure on $\HHH$,
Theorem \ref{liealg} includes the statement that
$\HHH$, endowed with the $L_{\infty}$-algebra structure
$\mathcal D$, and $\fra g$, endowed with the 
$L_{\infty}$-algebra structure associated with the 
differential graded Lie algebra structure,
are, via $\tau$, equivalent as $L_{\infty}$-algebras.

The general sh-Lie algebra perturbation lemma,
Theorem 2.5 in \cite{pertlie2},
extends 
Theorem \ref{liealg} to the more general case where the constituent
$\fra g$ in the contraction \eqref{contlie}
is merely an sh-Lie algebra. 
This sh-Lie algebra perturbation lemma yields, in particular,
the \lq\lq minimality theorem\rq\rq\ for general
$L_{\infty}$-algebras.

\section{More about the relationship with deformations}

We have pointed out above that
the idea of combining the Gerstenhaber operation in the Hochschild complex
mentioned in Section \ref{kadea}
with Stasheff's notion of $A_\infty$-algebra let 
Kadeishvili to the minimality theorem.
There is an obvious formal relationship between homological
perturbations and deformation theory but the relationship is
actually much more profound: In \cite{halpsttw}, S.~Halperin
and J.~Stasheff developed a procedure by means of which the
classification of rational homotopy equivalences inducing a fixed
cohomology algebra isomorphism can be achieved. Moreover, one can
explore the rational homotopy types with a fixed cohomology
algebra by studying perturbations of a free differential graded
commutative model by means of techniques from deformation theory.
This was initiated by M. Schlessinger and J. Stasheff
\cite{schlstas}. A related and independent development, phrased in
terms of what is called the functor $\mathcal D$, is due to N.
Berikashvili and his students at Georgia, notably T. Kadeishvili
and S. Saneblidze; see 
\cite{berikash} for some details and references.
A third approach in which only the underlying graded vector space was
fixed is due to Y. Felix \cite{felix}.
More remarks about the relationship between
homological
perturbations and deformation theory can be found in \cite{origins}.

Kadeishvili contributed once more to the relationship between
deformations and higher homotopies:
He observed 
an interpretation of $A_\infty$-operations in terms of a suitable notion of twisting
cochain, with respect to the Gerstenhaber operation in the Hochschild
(cochain) complex \cite{kadeisix}. An expanded version of that approach
will appear as \cite{kadeieig}.

\end{document}